\newcounter{myctr}
\newtheorem{theorem}{Theorem}[section]
\newtheorem{lemma}[theorem]{Lemma}
\newtheorem{proposition}[theorem]{Proposition}
\newtheorem{corollary}[theorem]{corollary}
\theoremstyle{definition}
\newtheorem{definition}[theorem]{Definition}
\newtheorem{example}[theorem]{Example}
\newtheorem{remark}[theorem]{Remark}
\def\gds{GDS\xspace}
\def\sds{SDS\xspace}
\def\gca{GCA\xspace}
\def\etgds{ET-GDS\xspace}
\def\etsds{ET-SDS\xspace}
\def\etgca{ET-GCA\xspace}
\def\F{\mathbf{F}}
\def\Per{\mathrm{Per}}
\def\Fix{\mathrm{Fix}}
\def\vset{\mathrm{v}}
\def\eset{\mathrm{e}}
\def\N{\mathbb{N}}
\def\Circ{\mathrm{C}}
\def\Path{\mathrm{P}}
\def\Fib#1{\mathrm{Fib}_{#1}}
\def\fib{\mathrm{Fib}}
\def\Luc{\mathrm{Luc}}
\def\supp{\ensuremath{\operatorname{supp}}}
\def\card#1{|#1|}
\definecolor{red}{rgb}{1.0,0.0,0.0}
\begin{document}
\makeatletter
\def\@biblabel#1{[#1]}
\makeatother
\markboth
{L. Chang, J. Cochran, H.~S. Mortveit, S. Raval, M. Schroeder}
{Adaptive Complex Contagions and Threshold Dynamical Systems}
%
\title[Adaptive Complex Contagions and Threshold Dynamical Systems]{ADAPTIVE COMPLEX CONTAGIONS AND THRESHOLD DYNAMICAL SYSTEMS}
\author[L. Chang]{LEON CHANG}
\address{Department of Applied Physics \& Applied Mathematics,
  Columbia University\\
lc2585@columbia.edu}
\author[J. Cochran]{JEFFREY COCHRAN}
\address{Department of Mathematics and Statistics, Georgetown University\\
jdc62@georgetown.edu}
\author[H.S. Mortveit]{HENNING S.~MORTVEIT}
\address{Department of Mathematics \& NDSSL,
Virginia Tech\\
hmortvei@vbi.vt.edu}
\author[S. Raval]{SIDDHARTH RAVAL}
\address{Department of Mathematics, Reed College\\
ravals@reed.edu}
\author[M. Schroeder]{MATTHEW SCHROEDER}
\address{Applied Mathematics, Geneva College\\
matthew.schroeder@geneva.edu}
\maketitle
\begin{abstract}
A broad range of nonlinear processes over networks are governed by
threshold dynamics. So far, existing mathematical theory
characterizing the behavior of such systems has largely been concerned
with the case where the thresholds are \emph{static}. In this paper we
extend current theory of finite dynamical systems to cover
\emph{dynamic thresholds}.  Three classes of parallel and sequential
dynamic threshold systems are introduced and analyzed. Our main
result, which is a complete characterization of their attractor
structures, show that sequential systems may only have fixed points as
limit sets whereas parallel systems may only have period orbits of
size at most two as limit sets. The attractor states are characterized
for general graphs and enumerated in the special case of paths and
cycle graphs; a computational algorithm is outlined for determining
the number of fixed points over a tree. We expect our results to be
relevant for modeling a broad class of biological, behavioral and
socio-technical systems where adaptive behavior is central.
\end{abstract}
\keywords{Keywords: graph dynamical system; sequential dynamical system; dynamic
  threshold; complex contagion; discrete dynamics; adaptive behavior}
\maketitle

\section{Introduction}
\label{sec:intro}

Many biological, social and technical systems can be described as
\emph{dynamical processes over graphs}. A specific example is the
spread of influenza in a human population~\cite{Eubank:04}. Here, the
vertices of a graph are used to represent the individuals of the
population with edges representing their contacts. Each vertex is
assigned a \emph{vertex state} that captures the particular
individual's disease status and possibly other relevant factors such
as behavioral or immunological characteristics. Based on the current
state and the health status of the neighbors, a \emph{vertex function}
governs the evolution of this particular individual's health state
with time. The application of the entire ensemble of vertex functions
determines how the global disease dynamics of the population evolves
with time. Similar examples include social \& behavioral
systems~\cite{Granovetter:78,Centola:07,Centola:09}, spread of malware
on wired \& wireless networks~\cite{Channakeshava:11}, and gene
prediction~\cite{Karaoz:04}.

\emph{Threshold functions} are widely used to capture the \emph{local
  dynamics} of systems such as those above. In its basic form, a
threshold function with threshold~$k$ is a Boolean function that
returns the value~$1$ (or true) if~$k$ or more of its binary inputs
are~$1$, and returns~$0$ (or false) otherwise.
The prominent role of threshold functions in modeling and applications makes
it desirable to have a solid understanding of their properties. Existing
theoretical work (see,
e.g.,~\cite{Barrett:06a,Mortveit:07,Kuhlman:12,Goles:80}) has largely been
concerned with the case where each vertex threshold~$k$ is fixed. While this
may be adequate in many situations, there are systems where the threshold
values naturally change with time. Again, taking influenza as an example, the
immune system typically receives a boost for the particular flu virus strain
after a clinical episode, effectively increasing one's threshold value for
falling sick.
For addictive behaviors such as smoking, the threshold (related to
peer-pressure, for example) for re-smoking after quitting may often be
less than the initial threshold.
For more complex diseases such as malaria, which involves acquired immunity,
one may see both drops and rises in threshold values: increased immunity is
developed upon exposure whereas a loss of immunity occurs through times of no
exposure~\cite{Smith:08}.
Using the framework of \emph{graph dynamical systems} (\gds), see for
example~\cite{Mortveit:01a,Macauley:09a,Mortveit:07,Macauley:11c,Macauley:10b,Laubenbacher:01a},
we introduce three classes of dynamic threshold function that target
the three cases described above and other complex contagions
(see,~e.g.~\cite{Centola:07,Granovetter:78}).

The three dynamic threshold functions we consider differ from standard
threshold functions by allowing the threshold of each vertex to change when its
state transitions from $0$ to $1$ or from $1$ to $0$. Specifically, for the
\emph{increasing} (resp. \emph{decreasing}) threshold function the vertex
threshold increases (resp. decreases) by $1$ under the $0 \longrightarrow 1$
(resp. $1 \longrightarrow 0$) transition.  The \emph{mixed} threshold function
combines the behaviors of the increasing and decreasing threshold
functions.

{\bfseries Paper outline.}  In Section~\ref{sec:def} we introduce
necessary background and terminology for graph dynamical systems along
with definitions for \emph{increasing}, \emph{decreasing} and
\emph{mixed} threshold graph dynamical systems (\etgds).  Our main
result is presented in Section~\ref{sec:limitsets} and states that
increasing, decreasing and mixed threshold \emph{sequential} graph
dynamical systems only have fixed points as attractors. Moreover, for
the synchronous case with increasing, decreasing and mixed threshold
functions periodic orbits have length at most~$2$. Our approaches use
several techniques. Most notably, the argument for the sequential
mixed threshold case uses an extension of a potential function
argument developed by Marathe et al in~\cite{Barrett:06a}.  The
parallel mixed threshold argument extends the classical proof by Goles
and Olivos. Whereas their proof was developed for neural networks, we
limit the statement of our main proof to the case where all edge
weights are~$1$. However, our proof for the synchronous mixed
threshold case is given for neural networks.  We also prove that
increasing and decreasing threshold systems are topologically
conjugated both in the synchronous and asynchronous case, and we
demonstrate that the six classes of dynamic threshold systems have a
common set up fixed points.  In light of this fact, our next class of
results in~Section~\ref{sec:enumeration} are on enumeration of this
common set of fixed points. Specifically, we consider the path and
circle graphs since these are building blocks of more general graphs.
Here we also present scaling properties for the number of fixed points
as a function of graph size. Our final result is an algorithm for
determining the number fixed points for dynamic threshold \gds when
the graph is a tree.
We remark that the problem of enumerating and finding fixed points is, in
general, NP-complete~\cite{Barrett:01g}.


\section{Definitions and Terminology}
\label{sec:def}

First, we define \emph{sequential dynamical systems} (\sds) and
\emph{generalized cellular automata} (\gca) which are both special
cases of \emph{graph dynamical systems} (\gds). We largely follow the
notation in~\cite{Macauley:09a}.
Let $X$ be a simple \emph{graph} on $\card{X} = n$ vertices and $K$ a
finite set. Associated to each vertex $v \in \{1,2,\ldots n\}$ is a
\emph{vertex state} $x_v \in K$. We write $x=(x_1,\ldots,x_n)$ for the
\emph{system state}.  Next, we let $n[v]$ be the sequence of vertices
contained in the $1$-neighborhood of~$v$ ordered in increasing order
with $v$ included. Also, let~$x[v]$ denote the restriction of the
\emph{system state} to~$n[v]$.
For each vertex~$v$ we have a \emph{vertex function}
\begin{eqnarray*}
  f_v \colon K^{d(v)+1} \longrightarrow K\;,
\end{eqnarray*}
where $d(v)$ is the \emph{degree} of $v$, and an \emph{$X$-local function} $F_v
\colon K^n \longrightarrow K^n$ given by
\begin{equation*}
  F_{v}(x) = (x_{1},\ldots, f_{v}(x[v]), \ldots, x_{n}) \;.
\end{equation*}
The vertex function $f_v$ takes the state of vertex $v$ and its
neighbors at time $t$ as input and computes the state for vertex~$v$
at time $t+1$.  The choice for when to use vertex functions or
$X$-local functions depends on the context; $X$-local functions are
typically used for sequential systems since these functions can be
composed. Some of the concepts above are illustrated in
Figure~\ref{fig:gds}.
\begin{figure}[ht]
\centerline{\includegraphics[]{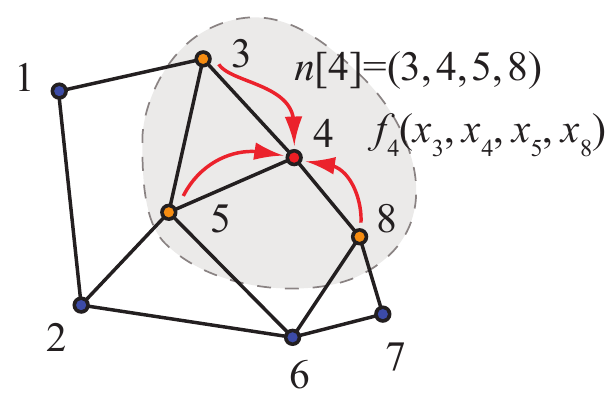}}
\caption{In the figure we have $n[4] = (3,4,5,8)$, and $x[4] = (x_3,
  x_4, x_5, x_8)$. The vertex function $f_4$ takes $x[4]$ as input to
  compute the state of vertex $4$ at the next time step.}
\label{fig:gds}
\end{figure}

Let $S_X$ be the symmetric group over the vertex set of $X$. Here a
permutation $\pi = (\pi_1,\dots,\pi_n) \in S_X$ induces an ordering
$<_\pi$ on the vertex set by $\pi_k <_\pi \pi_l$ if and only if $k<l$,
that is, $i<_\pi j$ if $i$ occurs before $j$ in $\pi$.
\begin{definition}[\sds, \gca]
Let $X$, $K$, $(f_v)_{v=1}^n$ and $\pi\in S_X$ be as above. The
corresponding sequential dynamical system and generalized cellular
automaton maps $\F_\pi$, $\F \colon K^n \longrightarrow K^n$ are
defined by
\begin{equation*}
  \F_\pi = F_{\pi_n}\circ F_{\pi_{n-1}} \circ\cdots\circ F_{\pi_1}
\end{equation*}
and
\begin{equation*}
  \F(x_1,\ldots, x_n) = \bigl(f_1(x[1]), \ldots, f_b(x[n])\bigr)\;,
\end{equation*}
respectively.
\end{definition}
Applying $\F_\pi$ or $\F$ to a system state $x\in K^n$ is called a
\emph{system update}, whereas applying $F_v$ to $x$ is called a
\emph{vertex update}. Here the \gca map $\F$ corresponds to
\emph{synchronous/parallel} updating of vertex states, and the \sds
map $\F_\pi$ corresponds to \emph{asynchronous/sequential} updating
using the sequence~$\pi$.
The \emph{phase space} of the dynamical system with map $\phi\colon
K^n \longrightarrow K^n$ is the directed graph $\Gamma(\phi)$ with
vertex set $K^n$ and edge set $\{(x,\phi(x) | x\in K^n \}$. A vertex
or state on a cycle in $\Gamma(\phi)$ is a \emph{periodic point} of
$\phi$; a state on a cycle of length one is a \emph{fixed point}. All
other states are \emph{transient states}. We denote the sets of
periodic points and fixed points of $\phi$ by $\Per(\phi)$ and
$\Fix(\phi)$, respectively. These points and their transitions
represent the long-term behavior of the dynamical system~$\phi$.

In this paper we are concerned with generalizations of the case where the
vertex functions are threshold functions. With state space $K = \{0,1\}$, a
\emph{standard threshold function} $\tau_{k,m} \colon K^{m} \longrightarrow K$ is
defined by
\begin{equation*}
\tau_{k,m}(x_1,\ldots,x_m) =
\begin{cases}
 1, & \sigma(x_1,\ldots,x_m)\ge k \\
 0, & \textrm{else,}
\end{cases}
\end{equation*}
where $\sigma(x_1,\ldots,x_m) = \card{ \{ i \mid x_i = 1\} }$.  To
each vertex $v$ we associate a threshold value $k_v \in
\mathbb{N}$. In contrast to standard threshold systems, we will not
require that the thresholds $k_v$ to be fixed, and will incorporate the
vertex threshold in the vertex state. Thus, for each vertex $v$ we
have an \emph{extended vertex state} $s_v = (x_v, k_v) \in K_v =
\{0,1\} \times D_v$ where $D_v = \{1,2,\ldots,d(v)+1\}$. In our case,
the system state is therefore an element of $\mathcal{S} =
\prod_{i=1}^n K_i$. Note that we have excluded the constant functions
(zero and one) through the choice of thresholds in the set $D_v$.
\begin{definition} An extended threshold graph dynamical system
  (\etgds) is a \gds where each vertex function $f_v \colon
  \prod_{i\in n[v]} K_i \longrightarrow K_v$ is given by
\begin{equation}
\label{eq:genthreshold}
f_v( s[v] ) =  (\tau_{k_v, d(v)+1}( x[v] ), g_v(x[v], k_v) ) \;,
\end{equation}
where $g_v$ is some function governing the evolution of the vertex
threshold. 
\end{definition}
Although a slight abuse of terminology, we will occasionally call the
tuple $x = (x_1,\dots,x_n)$ the \emph{system state}, and the tuple of
thresholds $k = (k_1,\dots,k_n)$ the \emph{system threshold state}
($k$). Wherever the context warrants a distinction, we will refer to
the tuple of extended vertex states $s = (s_1,\dots,s_n) =
\bigl((x_1,k_1), \dots, (x_n,k_n)\bigr)$ as the \emph{extended system
  state}.

In this paper, we consider three specific classes of \etgds corresponding to
three choices of the function $g_v$ in~\eqref{eq:genthreshold}. For the
\emph{increasing threshold vertex function} the function $g_v$ is given by
\begin{equation}
  \label{eq:incr}
  g^\uparrow_v( x[v], k_v ) =
  \begin{cases}
    k_v + 1, &   \text{if } x_v = 0              \land
    \sigma(x[v]) \ge k_v 
    \text{, and} \\
    k_v, & \text{otherwise.}
  \end{cases}
\end{equation}
Thus, for $g^\uparrow_v$ the threshold $k_v$ increases by~$1$ every time $x_v$
transitions from $0$ to $1$.
We denote the corresponding \etsds and \etgca by $\F^\uparrow_\pi$ and
$\F^\uparrow$.

Similarly, we define \emph{decreasing \etgds} by letting the vertex
threshold decrease by one whenever $x_v$ is mapped from~$1$ to~$0$. We
denote the corresponding maps by $g^\downarrow_v$, $f^\downarrow_v$,
$F^\downarrow_v$, $\F^\downarrow_\pi$, and $\F^\downarrow$.
Finally, a \emph{mixed \etgds} combines the definitions of
$f^\uparrow$ and $f^\downarrow$. Specifically, the $x_v$ component of
the state is mapped as before using the function $\tau$, whereas the
threshold map $g^\updownarrow_v$ will increase (resp. decrease) $k_v$
by~$1$ if $x_v = 0$ (resp. $1$) and $\sigma(x[v]) \ge k_v$ (resp. $<
k_v$), and will map $k_v$ identically in the remaining case. For the
mixed threshold function, the maps are denoted by $g^\updownarrow$,
$f^\updownarrow$, $\F^\updownarrow_v$, $\F^\updownarrow_\pi$ and
$\F^\updownarrow$.
\begin{example}
\label{ex:1}
Consider the case where the graph $X$ is a $2$-path with \etgds map
$\F^\uparrow_\pi$ and update sequence $\pi = (1,2)$. The~$(2^2)^2 = 16$ states
and their transitions are shown in the phase space of
$\Gamma(\F^\uparrow_\pi)$, see Fig.~\ref{fig:ex1}. Clearly, there are~10 fixed
points and six transient states.
\begin{figure}[ht]
\centerline{\includegraphics[scale=0.36]{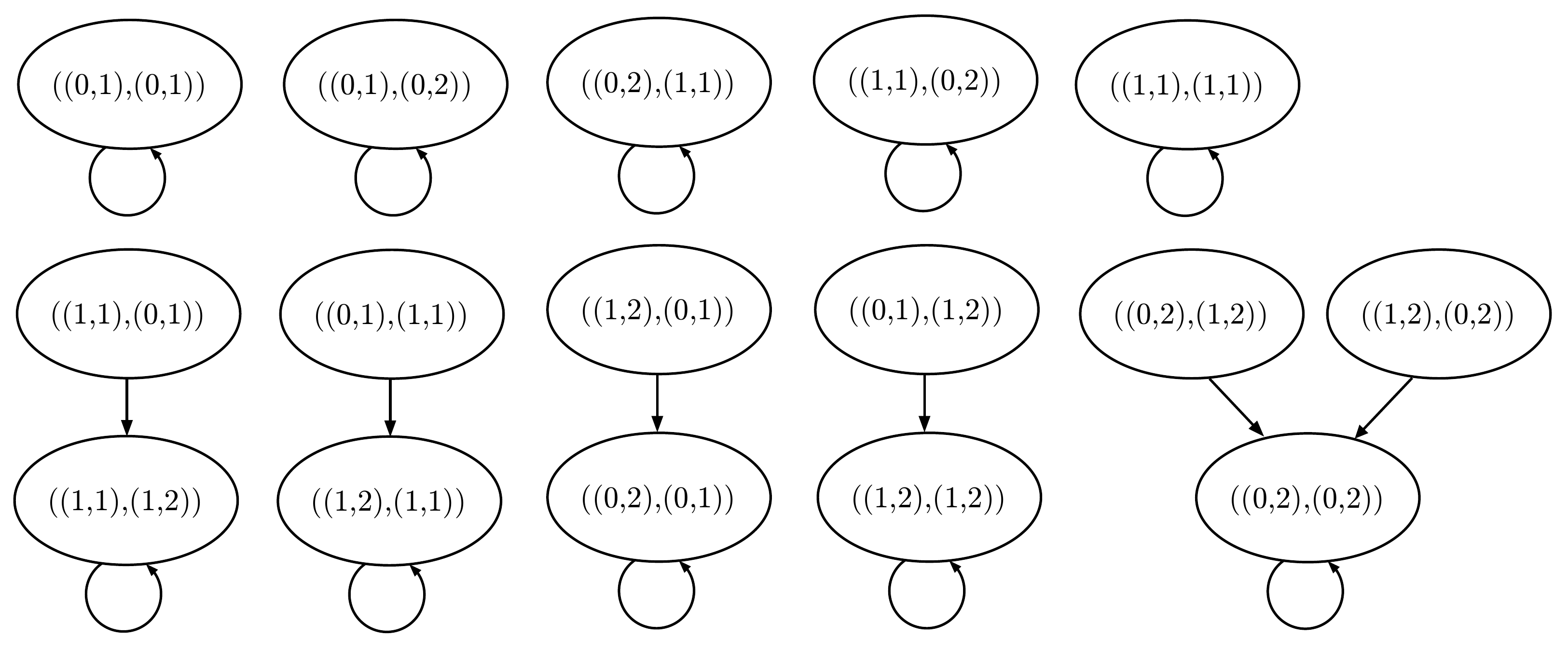}}
\caption{The phase space of Example~\ref{ex:1}. Each extended system
  state $s = \bigl((x_1,k_1), (x_2,k_2)\bigr)$ is outlined with an oval.}
\label{fig:ex1}
\end{figure}
\end{example}


\section{Characterizations of Limit Sets for \etgds}
\label{sec:limitsets}

In this section we classify the periodic orbit structure of \etgds
by proving the following result through a sequence of propositions and
lemmas. 
\begin{theorem}
\label{thm:main}
The maximal size of a periodic orbit for increasing, decreasing and mixed
\etsds maps is~$1$. For \etgca maps, the maximal size is~$2$.
\end{theorem}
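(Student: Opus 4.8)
The plan is to split the statement into the three threshold types and exploit the structural feature each one supplies. In the \emph{increasing} case, $g^{\uparrow}_v\in\{k_v,k_v+1\}$ by \eqref{eq:incr}, so every component $k_v$ of the system threshold state is non-decreasing under each vertex update, hence under both $\F^{\uparrow}_{\pi}$ and $\F^{\uparrow}$. Along a periodic orbit a non-decreasing integer sequence that returns to its initial value is constant, so each $k_v$ is constant on the orbit; by \eqref{eq:incr} this forces vertex $v$ to never undergo a $0\longrightarrow1$ transition along the orbit. Since $x_v$ changes only at updates of $v$ and at such an update can then only move $1\longrightarrow0$, yet must return to its initial value, $x_v$ is constant as well. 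Thus the extended state is constant on the orbit, i.e.\ the orbit is a fixed point. This argument is blind to the update mode, so it shows at once that $\F^{\uparrow}_{\pi}$ and $\F^{\uparrow}$ have \emph{only} fixed points, which in particular respects both claimed bounds. The \emph{decreasing} case is the exact mirror image ($k_v$ non-increasing, $1\longrightarrow0$ transitions forbidden on a periodic orbit), and also follows from the announced topological conjugacy between increasing and decreasing \etgds.

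The \emph{mixed} case carries the real content, and the step I would build everything on is the observation that $k_v-x_v$ is an invariant of $F^{\updownarrow}_v$, hence of $\F^{\updownarrow}_{\pi}$ and $\F^{\updownarrow}$. A short case check against \eqref{eq:genthreshold} and the definition of $g^{\updownarrow}_v$ shows that $k_v$ changes if and only if $x_v$ flips, and then by $+1$ on a $0\longrightarrow1$ flip and by $-1$ on a $1\longrightarrow0$ flip, while non-flipping updates move neither coordinate; in all cases $k_v-x_v$ is preserved. Setting $c_v:=k_v-x_v$, which is then fixed by the initial state alone, the update of the $x_v$-coordinate collapses to the ordinary threshold rule $x_v\mapsto 1$ iff $\sum_{j\sim v}x_j\ge c_v$ on the \emph{open} neighborhood of $v$. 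Consequently the extended state space decomposes into the invariant sheets $M_c=\{\,s : k_v-x_v=c_v\text{ for all }v\,\}$, and on each sheet the projection $s\mapsto x$ conjugates $\F^{\updownarrow}_{\pi}$ (resp.\ $\F^{\updownarrow}$) with a standard fixed-threshold \sds (resp.\ \gca) with threshold vector $c$. I would then invoke the classical results for standard threshold systems: sequential threshold dynamics have only fixed points (the potential-function argument of Marathe et al.~\cite{Barrett:06a}), and parallel threshold dynamics have periodic orbits of length at most $2$ (Goles and Olivos~\cite{Goles:80}). This yields the bounds $1$ and $2$ in the mixed case and completes the argument.

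To see that the bound $2$ for \etgca is attained, I would exhibit a minimal example: on the $2$-path with $c_1=c_2=1$, the extended state $\bigl((1,2),(0,1)\bigr)$ is mapped by $\F^{\updownarrow}$ to $\bigl((0,1),(1,2)\bigr)$ and back, a genuine $2$-cycle. I expect the main obstacle to be exactly the mixed case — and within it, spotting the invariant $k_v-x_v$. If one prefers not to cite the external threshold results, one must instead build a potential (energy) function directly on the extended state space and verify that it is non-increasing under vertex updates in the sequential case and satisfies a two-step inequality in the spirit of Goles and Olivos in the parallel case, with the strict-decrease bookkeeping now also carrying the contribution of the moving threshold $k_v$; that accounting is exactly where the announced extensions of the arguments of Marathe et al.\ and of Goles and Olivos are needed, and the invariant is precisely what makes it unnecessary.
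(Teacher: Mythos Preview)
Your argument is correct, and for the mixed case it is genuinely different from, and cleaner than, the paper's route. For increasing and decreasing \etgds you argue directly on a periodic orbit (the $k_v$'s are monotone and periodic, hence constant, hence no $0\!\to\!1$ transitions, hence $x_v$ constant), whereas the paper argues that the thresholds \emph{eventually} stabilize and then invokes Lemma~\ref{lem:classic}. Your version is slightly sharper: it shows that $\F^{\uparrow}$ and $\F^{\downarrow}$ have \emph{only} fixed points, not just period at most~$2$, so the parallel bound~$2$ in Theorem~\ref{thm:main} is attained solely by the mixed case.

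For the mixed case, the paper does not reduce to the classical results; it \emph{re-proves} them in the extended setting. Proposition~\ref{prop:mixlimit} builds a potential on $\mathcal{S}$ (extending the one in~\cite{Barrett:06a}) and checks that each non-identity vertex update strictly decreases it, and Proposition~\ref{prop:mixsynch} redoes the Goles--Olivos partition argument from~\cite{Goles:80,Kuhlman:12}, now tracking how the moving threshold shifts the bounds in each $\Psi_{ik}$ term. Your observation that $k_v-x_v$ is invariant under $F^{\updownarrow}_v$ short-circuits all of this: the extended phase space foliates into invariant sheets $M_c$, on each of which the projection $s\mapsto x$ conjugates $\F^{\updownarrow}_{\pi}$ (resp.\ $\F^{\updownarrow}$) with a fixed-threshold system whose rule at $v$ is $x_v\mapsto 1$ iff $\sum_{j\sim v}x_j\ge c_v$; Lemma~\ref{lem:classic} then applies verbatim. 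One small point worth flagging is that this reduced rule is a threshold on the \emph{open} neighborhood (equivalently $a_{vv}=0$ in~\eqref{eq:genthreshold2}), not literally the paper's ``standard'' closed-neighborhood form; both~\cite{Goles:80} and~\cite{Barrett:06a} cover this case, so Lemma~\ref{lem:classic} still delivers. What the paper's longer route buys is a self-contained proof in the neural-network generality of~\eqref{eq:genthreshold2} and an explicit potential that could be reused for transient-length estimates; what your route buys is a one-line reduction that makes the periodic-orbit bounds immediate.
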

Additionally, we relate the phase spaces of increasing and decreasing
\etgds and show that their maps are topologically conjugated. We start
by recalling the results for standard threshold systems. The proof of
the statements in following lemma can be found in~\cite{Barrett:06a}
for the sequential case and in~\cite{Goles:80} for the parallel case.
\begin{lemma}
\label{lem:classic}
A standard threshold \sds (resp. \gca) map has no periodic orbit of
size~$\ge 2$ (resp.~$\ge 3$).
\end{lemma}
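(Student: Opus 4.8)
The plan is to prove both halves with Lyapunov (potential) functions, exploiting that the interaction in a standard threshold \gds is \emph{symmetric}: all edge weights equal $1$ and each vertex counts its own state, so the update at $v$ compares the symmetric ``field'' $h_v(x):=\sigma(x[v])=x_v+\sum_{u\sim v}x_u$ against the fixed threshold $k_v$ (here $u\sim v$ denotes adjacency in $X$). In each case the argument reduces to showing that an appropriate energy cannot strictly decrease forever, so trajectories must collapse onto short cycles.

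For the sequential map $\F_\pi$ I would use the single-state potential
\begin{equation*}
 E(x) = -\tfrac12\sum_{u\sim w} x_u x_w + \sum_v \bigl(k_v-\tfrac12\bigr)\,x_v,
\end{equation*}
where the first sum is over ordered pairs of adjacent vertices. A vertex update that genuinely changes the state of $v$ replaces $x_v$ by $x_v+\delta$ with $\delta\in\{+1,-1\}$ and fixes all other coordinates, so $E$ changes by $\Delta E=\delta\,(k_v-\tfrac12-N_v)$ with $N_v=\sum_{u\sim v}x_u$. The threshold rule forces $N_v\ge k_v$ when $\delta=+1$ (a $0\to1$ flip) and $N_v\le k_v-2$ when $\delta=-1$ (a $1\to0$ flip, since the self-count makes the stay-at-$1$ condition $N_v\ge k_v-1$); in both cases $\Delta E\le-\tfrac12<0$. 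Thus every genuine vertex update strictly lowers $E$, and since $E$ takes finitely many values, no state can recur after a nontrivial flip. Hence a periodic point admits no state changes and is a fixed point, giving the \sds claim.

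For the parallel map $\F$ I would run the Goles--Olivos two-step energy
\begin{equation*}
 E(x,y) = -\sum_{u,v} w_{uv}\,x_u y_v + \sum_v k_v\,(x_v+y_v),
 \qquad w_{uv}=\begin{cases}1,& u\sim v \text{ or } u=v,\\ 0,&\text{else,}\end{cases}
\end{equation*}
evaluated on consecutive states. Writing $x=x^t$, $y=x^{t+1}=\F(x)$, $z=x^{t+2}=\F(y)$ and using the symmetry $w_{uv}=w_{vu}$ to rewrite $\sum_{u,v}w_{uv}x_uy_v=\sum_v x_v\,h_v(y)$, the two consecutive energies telescope to
\begin{equation*}
 E(y,z)-E(x,y) = \sum_v (x_v-z_v)\bigl(h_v(y)-k_v\bigr).
\end{equation*}
The parallel threshold rule sets $z_v=1$ exactly when $h_v(y)\ge k_v$, which makes each summand $\le 0$ (checking the cases $z_v=1$ and $z_v=0$ separately). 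Hence $E$ is non-increasing along the orbit; being finite-valued it is eventually constant, and on a periodic orbit it is constant throughout, forcing every summand to vanish.

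The main obstacle is converting ``all summands vanish'' into $z=x$, i.e. $x^{t+2}=x^t$. When $h_v(y)>k_v$ the vanishing summand forces $x_v=z_v=1$, and when $h_v(y)<k_v$ it forces $x_v=z_v=0$; the delicate case is the threshold tie $h_v(y)=k_v$, where the summand is $0$ regardless of whether $x_v$ and $z_v$ agree, so a single step does not pin the coordinate down. Resolving these ties---by tracking the tied coordinates around the whole cycle and using that the orbit returns to its start---is the crux, and is exactly the content of the classical Goles--Olivos argument in~\cite{Goles:80}. Once $x^{t+2}=x^t$ holds for every $t$, every periodic orbit has size at most $2$, which establishes the \gca claim and completes the lemma.
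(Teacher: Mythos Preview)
The paper does not prove Lemma~\ref{lem:classic}; it simply cites \cite{Barrett:06a} for the sequential statement and \cite{Goles:80} for the parallel one. Your sequential argument is correct and complete, and it is in the same spirit as \cite{Barrett:06a}: a potential that strictly drops at every genuine vertex flip. (The paper later deploys the Barrett-style vertex/edge potential in the proof of Proposition~\ref{prop:mixlimit}; your Ising-type energy $E$ is an equivalent and slightly slicker packaging for the static-threshold case.)

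Your parallel argument, however, has a small but real gap. You correctly derive
\[
E(y,z)-E(x,y)=\sum_v (x_v-z_v)\bigl(h_v(y)-k_v\bigr)\le 0,
\]
and you correctly isolate the tie $h_v(y)=k_v$ as the only obstruction to concluding $z=x$. But deferring this to \cite{Goles:80} is off target: the argument in that paper is not an energy-plus-tie-breaking argument at all; it is the antisymmetric $L$-operator construction that the present paper reproduces in the proof of Proposition~\ref{prop:mixsynch}, and it does not supply the tie resolution you are invoking. The fix is immediate and self-contained: replace $k_v$ by $k_v-\tfrac12$ in your two-step energy. Then
\[
E(y,z)-E(x,y)=\sum_v (x_v-z_v)\Bigl(h_v(y)-k_v+\tfrac12\Bigr),
\]
and since $h_v(y)$ and $k_v$ are integers the parenthetical factor never vanishes; each summand is $\le 0$ with equality precisely when $x_v=z_v$, so constancy of $E$ along a periodic orbit forces $x^{t+2}=x^t$ directly, with no tie case left to chase around the cycle.
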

We can now state our result for increasing threshold \gds maps.
\begin{proposition}
\label{prop:incr}
For any graph $X$ and any update sequence $\pi\in S_X$ the \etsds map
$\F_\pi^\uparrow$ has no periodic orbit of length~$\ge 2$. Similarly, the
\etgca map $\F^\uparrow$ has no periodic orbit of length~$\ge 3$.
\end{proposition}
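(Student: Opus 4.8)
The plan is to exploit the fact that under the increasing threshold rule each vertex threshold can only grow, never shrink. The first step is the elementary observation that $g^\uparrow_v(x[v],k_v)\in\{k_v,k_v+1\}$, so a single vertex update $F^\uparrow_v$ leaves every threshold component unchanged except possibly $k_v$, which it can only increase. Consequently the threshold component $k=(k_1,\dots,k_n)$ is non-decreasing, coordinatewise, under $F^\uparrow_v$, hence under $\F^\uparrow_\pi$, and also under the synchronous map $\F^\uparrow$.

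The second step is to deduce that the threshold component is \emph{constant} along any periodic orbit. Let $s$ be a periodic point of $\F^\uparrow_\pi$ (resp. $\F^\uparrow$) of period $p$; following the threshold component around the orbit gives a coordinatewise non-decreasing chain that returns to its starting value after $p$ steps, so it must be constant at every point of the orbit, say equal to $k^\ast=(k^\ast_1,\dots,k^\ast_n)$. In the sequential case I would push this a little further and track the threshold through the $np$ individual vertex updates that make up $p$ applications of $\F^\uparrow_\pi$: by the same monotonicity argument the threshold stays equal to $k^\ast$ at every intermediate stage, since a threshold that had increased could never decrease back.

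The third step is the reduction to the classical result. Once the thresholds are pinned at $k^\ast$ along the orbit, each vertex function $f^\uparrow_v$, restricted to extended states whose threshold component is $k^\ast$, updates the $x$-coordinate exactly by $\tau_{k^\ast_v,\,d(v)+1}(x[v])$ and leaves $k^\ast$ fixed. Hence the projection of the periodic orbit onto the $x$-coordinates is a periodic orbit, of the same period $p$, of the \emph{standard} threshold \sds map $\F_\pi$ (resp. \gca map $\F$) associated with the fixed thresholds $k^\ast$. Applying Lemma~\ref{lem:classic} forces $p\le 1$ in the sequential case and $p\le 2$ in the synchronous case, which is precisely the assertion of the proposition.

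The argument is short, and the only place that requires genuine care is the reduction in the second and third steps, i.e. making sure that "constant thresholds on the orbit" really does license substituting the standard threshold dynamics — in particular, in the sequential case, that no threshold increases transiently during the $n$ vertex updates of a single $\F^\uparrow_\pi$ step, so that every such vertex update is a bona fide standard threshold update. (Note that boundedness of the threshold sets $D_v$ need not be invoked here, since a periodic orbit bounds its own threshold values automatically.)
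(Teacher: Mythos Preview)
Your argument is correct and is essentially the same as the paper's: both proofs observe that the threshold component is coordinatewise non-decreasing along orbits, deduce that it must be constant once the orbit is periodic (the paper phrases this as ``eventually fixed'' from an arbitrary initial state, using finiteness of the state space), and then invoke Lemma~\ref{lem:classic} on the resulting standard threshold system. Your extra care in the sequential case---tracking the threshold through the individual vertex updates within one application of $\F^\uparrow_\pi$---is a worthwhile remark that the paper leaves implicit, and your observation that boundedness of $D_v$ is not actually needed is a nice refinement.
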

\begin{proof}
Choose an arbitrary state $s = (x,k) \in \mathcal{S}$ and consider the orbit
$\mathcal{O}(s)$ of $\F^\uparrow_\pi$ starting at $s$. By definition, it follows
that each component of $k$ is non-decreasing and bounded along
$\mathcal{O}(s)$. Since the state space is finite, there exists an integer
$r\ge 0$ such that each $k$-component of $F^{\uparrow (u)}_\pi(s)$
is fixed for $u\ge r$. Consequently, for $u\ge r$ the dynamics of
$\F^\uparrow_\pi$ coincides with a standard threshold \sds over $X$, and,
using Lemma~\ref{lem:classic}, we conclude that $s$ is eventually fixed. Since
$s$ was arbitrary, the first statement follows.  The proof for increasing
threshold \gca maps is completely analogous and is therefore omitted.
\end{proof}
A similar proof can be constructed for the decreasing threshold \sds
and \gca, however a stronger statement is possible: increasing and
decreasing threshold \gds maps are in fact topologically conjugated.
\begin{proposition}
\label{prop:conj}
For any graph $X$ and any update sequence $\pi\in S_X$ there exists a
bijection $\psi \colon \mathcal{S} \longrightarrow \mathcal{S}$ such
that
\begin{equation}
\label{eq:conj}
\psi \circ \F^\uparrow_\pi = \F^\downarrow_\pi \circ \psi\;,
\end{equation}
where $\mathcal{S} = \prod_i K_i$.
\end{proposition}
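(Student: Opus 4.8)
The natural idea is to build $\psi$ out of the involution that ``flips'' the binary component of each vertex state while simultaneously complementing the threshold so that the role of ``present'' and ``absent'' neighbors is exchanged. Concretely, I would define $\psi$ componentwise by $\psi(s)_v = (1-x_v,\; d(v)+2-k_v)$; note that if $k_v \in D_v = \{1,\dots,d(v)+1\}$ then $d(v)+2-k_v \in D_v$ as well, so $\psi$ is a well-defined map $\mathcal{S}\to\mathcal{S}$, and it is clearly an involution, hence a bijection. The intuition is that counting the $1$'s above a threshold $k_v$ among $d(v)+1$ inputs is the same as counting the $0$'s at or below the complementary threshold, which is exactly the arithmetic underlying the standard self-duality of threshold functions; the increasing update (raise $k_v$ when $x_v$ goes $0\to 1$) is carried by $\psi$ to the decreasing update (lower the complemented threshold when the complemented bit goes $1\to 0$).

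The key computational step is a vertex-level identity. Writing $x'[v]$ for the neighborhood state with every bit flipped, I would first check that
\begin{equation*}
\tau_{k_v,\,d(v)+1}(x[v]) = 1 \quad\Longleftrightarrow\quad \tau_{\,d(v)+2-k_v,\,d(v)+1}(x'[v]) = 0,
\end{equation*}
which follows from $\sigma(x'[v]) = (d(v)+1) - \sigma(x[v])$ together with the equivalence $\sigma(x[v])\ge k_v \iff (d(v)+1)-\sigma(x[v]) \le d(v)+1-k_v < d(v)+2-k_v$. Hence the flipped bit update matches the complemented-threshold update. Next I would verify that the threshold-evolution rules correspond: $g^\uparrow_v$ fires (increments $k_v$) precisely when $x_v=0$ and $\sigma(x[v])\ge k_v$, i.e.\ when the $\psi$-image has bit $1-x_v = 1$ and $\sigma(x'[v]) = (d(v)+1)-\sigma(x[v]) \le d(v)+1-k_v < d(v)+2-k_v$, which is exactly the condition under which $g^\downarrow_v$ fires on the complemented state (bit $1$, count strictly below the threshold), and in that case it decrements the threshold $d(v)+2-k_v$ to $d(v)+2-(k_v+1) = d(v)+2-k_v-1$, matching $\psi$ applied to the incremented threshold. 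Putting these two facts together gives the vertex-function intertwining $\psi_v \circ f^\uparrow_v = f^\downarrow_v \circ (\text{appropriate restriction of }\psi)$, equivalently $\psi \circ F^\uparrow_v = F^\downarrow_v \circ \psi$ for every vertex $v$.

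Finally, since $\F^\uparrow_\pi = F^\uparrow_{\pi_n}\circ\cdots\circ F^\uparrow_{\pi_1}$ and likewise for $\F^\downarrow_\pi$, the single-vertex conjugacies compose: $\psi\circ\F^\uparrow_\pi = \psi\circ F^\uparrow_{\pi_n}\circ\cdots\circ F^\uparrow_{\pi_1} = F^\downarrow_{\pi_n}\circ\psi\circ F^\uparrow_{\pi_{n-1}}\circ\cdots = \cdots = \F^\downarrow_\pi\circ\psi$, which is~\eqref{eq:conj}. (The same argument with no permutation conjugates $\F^\uparrow$ and $\F^\downarrow$, since $\psi$ acts componentwise and commutes with assembling the parallel update.) The main thing to get right—and the only place where an off-by-one error could creep in—is the bookkeeping on the threshold set $D_v$ and the strict-versus-nonstrict inequalities in the two firing rules; the conjugacy itself is then essentially forced by self-duality of thresholds. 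I would present the vertex identity as a short lemma and then the composition as a one-line induction on the length of $\pi$.
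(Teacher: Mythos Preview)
Your proposal is correct and takes essentially the same approach as the paper: you define the identical involution $\psi_v(x_v,k_v)=(1-x_v,\,d(v)+2-k_v)$, verify the vertex-level conjugacy $\psi\circ F^\uparrow_v = F^\downarrow_v\circ\psi$ via the self-duality arithmetic, and then compose along $\pi$. The only difference is organizational---you separate the bit-update identity from the threshold-update identity, whereas the paper runs a three-case analysis on whether $x_v$ goes $0\to1$, $1\to0$, or is fixed---but the substance is the same.
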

In other words, the maps~$\F^\uparrow_\pi$ and~$\F^\downarrow_\pi$ are
topologically conjugated.  In other words, their phase spaces are
isomorphic as directed graphs.
\begin{proof}
We prove this by constructing the bijection $\psi$ in~\eqref{eq:conj}
directly. To this end, let $\psi = (\psi_1,\ldots,\psi_n) \colon \mathcal{S}
\longrightarrow \mathcal{S}$ be defined coordinate-wise by setting $\psi_i
\colon \{0,1\} \times D_i \longrightarrow \{0,1\} \times D_i$ equal to
\begin{equation}
  \label{eq:psiconj}
  \psi_i(x_i, k_i) = ( x_i + 1 \text{ mod } 2, d(v_i) - k_i + 2 )\;.
\end{equation}
We claim that $\psi$ is its own inverse. To see this, let $(x_i,k_i)$
denote an arbitrary vertex state for vertex~$i$. Then
\begin{align*}
  (\psi_i \circ \psi_i)(x_i,k_i)
     & =  \psi_i( x_i + 1 \text{ mod } 2, d(v_i)-k_i + 2 ) \\
     & =  ( (x_i + 1) + 1 \text{ mod } 2, d(v_i)-(d(v_i)-k_i+2) + 2)
       =  (x_i, k_i) \;,
\end{align*}
from which it follows that $\psi$ is invertible and hence a bijection.
To establish~\eqref{eq:conj}, we rewrite that equation as
\begin{equation*}
 \prod_i F^\uparrow_{\pi(i)} =
     \prod_i \bigl( \psi^{-1} \circ \F^\downarrow_{\pi(i)} \circ \psi \bigr)\;.
\end{equation*}
Since
$\psi^{-1} = \psi$ it is sufficient to establish that
\begin{equation}
\label{eq:conji}
  \F^\uparrow_i = \psi \circ F^\downarrow_i \circ \psi
\end{equation}
for each $i \in \{1,2,\ldots n\}$. By the structure of the local maps
and $\psi$, the conjugation relation clearly holds in all coordinates
$j\ne~i$ when we evaluate the two sides of~\eqref{eq:conji} at a
state $s$. This leaves three cases to consider:

{\bfseries Case 1:} \emph{$x_i$ is mapped from~$0$ to~$1$ by
  $\F^\uparrow_i$}, which is possible only if $x_i = 0$,
$\sigma(x[i]) \geq k_i$, and $(\F^\uparrow_i(s))_i = (1, k_i+1)$. With
$s' = \psi( s ) = (x_j+1 \text{ mod } 2, d(v_j) - k_j + 2)_j$ we see
that $x'_i = 1$, $k'_i = d(i) - k_i + 2$, and $\sigma( x'[i] ) = d(i)
- \sigma( x[i] ) + 1 \le d(i) - k_i + 1$. Therefore, $\sigma( x'[i] )
\le k'_i$ and $\F^\downarrow_i$ maps $s'$ to $s''$ where $x''_i = 0$
and $k''_i = d(i) - k_i + 1$. Then $\psi(s'')_i = (1, d(i) -
(d(i)-k_i+1) + 2) = (1, k_i + 1)$, establishing Case~1.

{\bfseries Case 2:} \emph{$x_i$ is mapped from~$1$ to~$0$ by
  $\F^\uparrow_i$}. This case is similar to the first case. Here $x_i
= 1$, $\sigma(x[i]) \le k_i - 1$, and $(\F^\uparrow_i(s))_i = (0,
k_i)$.  Setting $s' = \psi( s ) = (x_j+1 \text{ mod } 2, d(v_j) -
k_j + 2)_j$, we see that $x'_i = 0$, $k'_i = d(i) - k_i + 2$, and
$\sigma( x'[i] ) = d(i) - \sigma( x[i] ) + 1 \ge d(i) - k_i + 2 =
k'_i$.  Since $\sigma( x'[i] ) \ge k'_i$ we see that $\F^\downarrow_i$
maps $s'$ to $s''$ where $x''_i = 1$ and $k''_i = k'_i = d(i) - k_i +
2$. Then $\psi(s'')_i = (0, d(i) - (d(i)-k_i+2) + 2) = (0, k_i)$,
establishing Case~2.

{\bfseries Case 3:} \emph{$x_i$ is mapped identically by
  $\F^\uparrow_i$}. In this case we either have ($i$) $x_i = 1$ and
$\sigma(x[i]) \ge k_i$ or ($ii$) $x_i = 0$ and $\sigma(x[i]) \le k_i -
1$.  In both cases we have $(\F^\uparrow_i(s))_i = (x_i, k_i)$ and $s'
= \psi(s) = ((x_j + 1 \text{ mod } 2, d(j) - k_j + 2))_j$. For case
($i$) $\sigma(x'[i]) = d(i) - \sigma(x[i]) + 1 \le k'_i$, so with $s''
= \F^\downarrow_i(s')$ we have $x''_i = 0$ and $k''_i = d(i) - k_i +
2$. It follows that for this case $\psi(s'')_i = (1, k_i)$ as
desired. Case ($ii$) is virtually identical with $\sigma(x'[i]) = d(i) -
\sigma(x[i]) + 1 \ge d(i) - (k_1-1) + 1 \ge k'_i$ leading to $x''_i =
1$ and $k''_i = d(i) - k_i + 2$ where $s'' = \F^\downarrow_i(s')$, and
we have $\psi(s'')_i = (0, k_i)$ as required, concluding the proof for
\etsds.
\end{proof}
The argument in the proof combined with the structure of the
map~$\psi$ ensure that~\eqref{eq:conj} holds with $\F^\uparrow_{\pi}$
and $\F^\downarrow_{\pi}$ replaced by $\F^\uparrow$ and
$\F^\downarrow$, respectively:
\begin{corollary}
\label{cor:conjugacy}
For any graph X, the maps $\F^\uparrow$ and $\F^\downarrow$ are topologically
conjugated.
\end{corollary}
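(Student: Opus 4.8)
The plan is to reuse, essentially verbatim, the coordinate-by-coordinate computation already carried out in the proof of Proposition~\ref{prop:conj}. The only difference between the sequential map $\F^\uparrow_\pi$ and the synchronous map $\F^\uparrow$ is the scheduling of the vertex updates; the bijection $\psi$ of~\eqref{eq:psiconj} is unchanged, and it is defined coordinate-wise. So the goal is to establish $\psi \circ \F^\uparrow = \F^\downarrow \circ \psi$, from which topological conjugacy of $\F^\uparrow$ and $\F^\downarrow$ follows at once, since $\psi$ is a bijection of $\mathcal{S}$ and hence induces an isomorphism of the two phase spaces as directed graphs.

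First I would record the elementary fact that all three maps involved act coordinate-wise in the relevant sense: for any state $s$ and any vertex $i$ one has $(\F^\uparrow(s))_i = f^\uparrow_i(s[i])$, $(\psi(s))_i = \psi_i(s_i)$, and similarly for $\F^\downarrow$. Since $\psi$ is diagonal, the tuple $(\psi(s))[i]$ is exactly the coordinate-wise $\psi$-image of $s[i]$. Consequently it suffices to verify, for each vertex $i$, the single-coordinate identity $\psi_i\bigl(f^\uparrow_i(s[i])\bigr) = f^\downarrow_i\bigl((\psi(s))[i]\bigr)$, and then read it off in every coordinate to conclude $\psi \circ \F^\uparrow = \F^\downarrow \circ \psi$.

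The key observation is that this single-coordinate identity is precisely what Cases~1--3 in the proof of Proposition~\ref{prop:conj} already established. That analysis only ever inspects coordinate $i$, and the only arithmetic it uses is the relation $\sigma(x'[i]) = d(i) - \sigma(x[i]) + 1$ between the threshold count of $s[i]$ and that of its $\psi$-image, together with the explicit formula for $\psi_i$; neither ingredient cares whether the remaining vertices are updated before, after, or simultaneously with $i$. So I would simply invoke that case analysis, now reading it as a statement about the vertex functions $f^\uparrow_i$, $f^\downarrow_i$ rather than about the $X$-local functions $F^\uparrow_i$, $F^\downarrow_i$. I do not expect a genuine obstacle here — the entire mathematical content lives in Proposition~\ref{prop:conj}; the only point demanding care is the bookkeeping that $(\psi(s))[i]$ coincides with the $\psi$-image of $s[i]$, so that the threshold count transforms in the synchronous setting exactly as it did in the sequential one.
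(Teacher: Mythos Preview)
Your proposal is correct and matches the paper's approach: the paper simply remarks that the case analysis in the proof of Proposition~\ref{prop:conj}, together with the coordinate-wise structure of~$\psi$, immediately yields the synchronous conjugacy. You have spelled out in more detail precisely why that transfer works (namely that $(\psi(s))[i]$ is the $\psi$-image of $s[i]$, so the single-coordinate identity established in Cases~1--3 suffices), but the underlying argument is the same.
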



To conclude the proof of Theorem~\ref{thm:main} we next turn to the
case of mixed dynamic threshold \sds. The following result extends an
earlier result in~\cite{Barrett:06a} for standard threshold systems.
\begin{proposition}
\label{prop:mixlimit}
  The \etsds map $\F^\updownarrow_\pi$ has no periodic orbits of
  length~$\ge 2$.
\end{proposition}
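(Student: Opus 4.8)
The plan is to adapt the potential-function (Lyapunov) argument of Marathe et al.\ from~\cite{Barrett:06a} to the mixed dynamic threshold setting. The key conceptual point is that in a mixed \etsds the threshold of a vertex is no longer monotone along an orbit, so the quick argument used in Proposition~\ref{prop:incr} fails; instead I would look for a single scalar potential that is \emph{strictly} decreasing on every non-fixed state and hence precludes periodic orbits of length~$\ge 2$. Concretely, I would attach to each vertex~$v$ a local energy term that depends on its extended state $s_v = (x_v,k_v)$ and on the states of its neighbours, designed so that (i) whenever a vertex update actually changes $x_v$ (from $0$ to $1$ when $\sigma(x[v])\ge k_v$, or from $1$ to $0$ when $\sigma(x[v])<k_v$) the corresponding term drops, and (ii) the coupled contributions from the neighbours, together with a carefully chosen term that tracks the threshold value $k_v$, do not increase by more than that drop. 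The natural candidate is something of the form
\begin{equation*}
  E(s) = \sum_{v} \Bigl[ (1-2x_v)\,k_v \Bigr]
         \;-\; \tfrac12 \sum_{\{u,v\}\in \eset(X)} (1-2x_u)(1-2x_v)
         \;+\; \text{(monotone threshold correction)},
\end{equation*}
i.e.\ the classical quadratic threshold energy $-\tfrac12\sum x_u x_v + \sum (k_v-\tfrac12)x_v$ rewritten in $\pm1$ variables, augmented by a term that absorbs the $\pm1$ jumps of the dynamic threshold.

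The steps, in order, would be: first, fix an arbitrary update order $\pi$ and a state $s$, and analyse a single vertex update $F^\updownarrow_{\pi_i}$ at a generic intermediate state. Break into the three cases already familiar from the proof of Proposition~\ref{prop:conj} --- the update flips $x_v$ from $0$ to $1$ (so $k_v \mapsto k_v+1$), flips it from $1$ to $0$ (so $k_v\mapsto k_v-1$), or leaves $x_v$ unchanged (so $k_v$ unchanged as well). Second, compute the change $\Delta E$ in each case, using that a vertex only sees already-updated neighbours earlier in $\pi$ and not-yet-updated neighbours later --- this asymmetry is exactly what the sequential argument exploits, and it is why the potential can be made strictly monotone for \sds while only weakly so for \gca. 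Third, verify that the threshold-correction term is chosen so that the $\pm1$ change of $k_v$ is precisely cancelled against the term $(1-2x_v)k_v$ when $x_v$ flips, leaving a strictly negative contribution coming from the $\sigma(x[v])$ versus $k_v$ comparison. Fourth, conclude that $E$ strictly decreases on any system update that is not the identity; since $E$ takes finitely many values, no orbit can return to a non-fixed state, so every periodic point is a fixed point.

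The main obstacle I anticipate is getting the bookkeeping of the threshold term exactly right so that it stays \emph{bounded} (or at least bounded below) along an orbit while still cancelling the $k_v$-jumps. Because $k_v$ genuinely oscillates in the mixed system, a naive term like $\sum k_v$ is useless, and a term like $\sum (1-2x_v)k_v$ changes sign with $x_v$; the delicate point is that when $x_v$ flips from $0$ to $1$, the quantity $(1-2x_v)k_v$ changes from $+k_v$ to $-(k_v+1)$, a jump of $-(2k_v+1)$, and one must check that the neighbour-coupling terms plus the gain $\sigma(x[v])\ge k_v$ make the net change negative, and symmetrically for the reverse flip using $\sigma(x[v])<k_v$. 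A secondary subtlety is ensuring the argument is genuinely independent of where vertex~$v$ sits in the order $\pi$; I would handle this by writing $\Delta E$ as a sum of the "pure" local drop plus correction terms indexed by neighbours, and checking the sign of the total using only the threshold inequality that triggered the flip, exactly in the spirit of the Goles--Olivos / Marathe et al.\ arguments. Once the correct potential is identified, the three-case verification should be a routine (if fiddly) computation, and finiteness of the state space $\mathcal{S}$ does the rest.
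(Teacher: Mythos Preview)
Your strategy---construct a potential that strictly decreases under every non-trivial vertex update, adapting~\cite{Barrett:06a}---is exactly the paper's approach. Two remarks on execution. First, the paper's potential is simpler and sidesteps your unspecified ``threshold correction'': it sets the vertex potential to $k_v$ when $x_v=1$ and to $d(v)+2-k_v$ when $x_v=0$, adds an edge potential of $1$ for each edge joining unequal states, and a direct two-case count then gives $P(\F^\updownarrow_i(s))-P(s)\le -1$ whenever $x_i$ flips, with the $\pm 1$ threshold jump absorbed automatically. Second, your claim that the sequential argument ``exploits'' the asymmetry between already-updated and not-yet-updated neighbours is a misconception: the strict drop is verified for a single local map $\F^\updownarrow_i$ applied to an arbitrary state, with no reference to $\pi$, and the compositional structure of $\F^\updownarrow_\pi$ merely accumulates these local drops---so your secondary worry about dependence on the position of $v$ in $\pi$ evaporates.
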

\begin{proof}
Since individual threshold components (i.e., the $k_i$'s) are not
necessarily monotone along orbits for $\F^\updownarrow_\pi$,
the previous arguments involving Lemma~\ref{lem:classic} cannot be put
to use directly. Instead we use an extension of a potential function argument
from~\cite{Barrett:06a}.

Let $X$ be a graph, $s = (x,k) \in \mathcal{S}$, $v\in\vset[X]$ and $e\in
\eset[X]$. Also, let $T_1(s,v)$ denote the (dynamic) threshold value for
vertex $v$ (i.e., $k_v$), and let $T_0(s,v)$ denote the smallest number of
states in $x[v]$ that must be zero to ensure that $x_v$ is mapped to
zero. Clearly, we have the relation $\bigl(d(v)+1\bigr) - T_0(s,v) = T_1(s,v)
- 1$, or $T_0(s,v) + T_1(s,v) = d(v) + 2$.  We next introduce the
vertex potentials
\begin{equation*}
P(s, v) =
\begin{cases}
T_1(s,v), & x_v = 1\\
T_0(s,v), & x_v = 0
\end{cases}
\end{equation*}
and the edge potentials
\begin{equation*}
P(s,e = \{u,v\}) =
\begin{cases}
1, & x_u \ne x_v \\
0, & \text{otherwise,}
\end{cases}
\end{equation*}
where $e = \{u,v\}\in\eset[X]$.
The potential function $P \colon \mathcal{S} \longrightarrow \N$ is
defined as the sum of all vertex and edge potentials:
\begin{equation}
  \label{eq:potential}
  P(s) = \sum_{v \in\vset[X]} P(s,v) + \sum_{e \in \eset[X]} P(s,e)
\end{equation}
Clearly, there exists positive integers $m$ and $M$ such that $m \le P(s) \le
M$ for all $s\in \mathcal{S}$. Set $s' = \F^\updownarrow_i(s)$.

{\bfseries Claim:} \emph{for each $s\in \mathcal{S}$ such that
  $\F^\updownarrow_v$ maps $x_v$ non-identically we have $P(s') <
  P(s)$}. Clearly, any change in potential is solely associated with
vertex $v$ and edges incident with $v$. We write $n_0(s,v)$ and
$n_1(s,v)$) for the number of neighbors of $v$ with $x_v = 0$ and $x_v
= 1$, respectively. There are two possible cases:

{\bfseries Case 1:} If \emph{$x_v$ is mapped from $0$ to $1$} then we have
$n_1(s,v) \ge T_1(s,v)$, and, using the identity $T_0(v) + T_1(v)
= d(v) + 2$ we get $n_0(s,v) \le T_0(s,v) - 2$. Since $T_1(s', v) =
T_1(s,v) + 1$ we obtain
\begin{align*}
P(s') - P(s)
  &= T_1(s', v) + n_0(s', v) - [ T_0(s,v) + n_1(s,v)] \\
  &\le [T_1(s,v) + 1] + T_0(s,v) - 2 - T_0(s,v) - n_1(s,v) \le -1 \;.
\end{align*}

{\bfseries Case 2:} When \emph{$x_v$ is mapped from $1$ to $0$} we have
$n_0(s,v) \ge T_0(s,v)$. This gives $n_1(s,v) \le T_1(s,v) - 2$. We
also have $T_0(s,v) + 1 = T_0(s',v)$ which yields
\begin{align*}
P(s') - P(s)
  &= T_0(s',v) + n_1(s',v) - [ T_1(s, v) + n_0(s, v)] \\
  &= T_0(s,v) + 1 + n_1(s,v) - T_1(s,v) - n_0(s,v) \\
  &\le n_0(s,v) + 1 + T_1(s,v) - 2 - T_1(s,v) - n_0(s,v) = -1 \;,
\end{align*}
proving the claim. In light of the compositional structure of
$\F^\updownarrow_\pi$ and the boundedness of the potential function
$P$, it follows immediately that $\F^\updownarrow_\pi$ cannot have
periodic orbits of length $\ge 2$ since that would cause an immediate
contradiction.
This completes the proof of Proposition~\ref{prop:mixlimit}.
\end{proof}
\begin{remark}
The particular choice of potential function used in the previous proof
does not work directly for the synchronous case. To see this, consider
the circle graph on four vertices with state $s = ( (1,2), (0,2),
(1,2), (0,2) )$, which maps to $s' = ( (0,1), (1,3), (0,1), (1,3))$,
which in turn maps to $s$. Clearly, the edge potential is $4$ for both
states whereas the vertex potential of $s$ and $s'$ is $8$ and $12$,
respectively.
\end{remark}


The proof of Theorem~\ref{thm:main} is completed by the following:
\begin{proposition}
\label{prop:mixsynch}
The \etgca map $\F^\updownarrow$ has no periodic orbits of length $\ge 3$.
\end{proposition}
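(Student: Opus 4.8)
The plan is to run, for the parallel dynamic-threshold map, the energy (Lyapunov) argument that Goles and Olivos use for ordinary parallel threshold networks. The preceding remark shows that the sequential potential $P$ of Proposition~\ref{prop:mixlimit} genuinely fails in the synchronous case, so I would instead build a functional on \emph{consecutive pairs} of extended states. Fix a trajectory $s^0,s^1,s^2,\dots$ of $\F^\updownarrow$, write $s^t=(x^t,k^t)$, and abbreviate $h_v^t=\sigma(x^t[v])$, so the update rule reads $x_v^{t+1}=1\iff h_v^t\ge k_v^t$. The identity that drives the bookkeeping is
\[
  k_v^{t}-k_v^{t-1}\;=\;[\,x_v^{t-1}=0\land x_v^t=1\,]\;-\;[\,x_v^{t-1}=1\land x_v^t=0\,],
\]
immediate from the definition of $g_v^\updownarrow$ together with $x_v^t=1\iff h_v^{t-1}\ge k_v^{t-1}$.

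Next I would introduce the energy
\[
  E(t)\;=\;-\sum_{\{u,v\}\in\eset[X]}\bigl(x_u^tx_v^{t-1}+x_v^tx_u^{t-1}\bigr)\;+\;\sum_{v\in\vset[X]}\bigl(k_v^{t-1}-1\bigr)x_v^{t-1}\;+\;\sum_{v\in\vset[X]}\bigl(k_v^{t}-1\bigr)x_v^{t},
\]
i.e.\ the classical Goles--Olivos \emph{interaction} term, plus a linear threshold term for each of the two time slices, plus low-order linear corrections (visible here only as the shift $k_v\mapsto k_v-1$) whose sole role is to mop up the part of the threshold variation that does not cancel on its own. A direct computation, using the symmetry of the edge terms and the identity above, should collapse all the threshold and correction contributions and leave
\[
  E(t+1)-E(t)\;=\;-\sum_{v\in\vset[X]}\bigl(x_v^{t+1}-x_v^{t-1}\bigr)\bigl(h_v^t-k_v^t\bigr).
\]
Because $x_v^{t+1}=1\iff h_v^t\ge k_v^t$, every summand on the right is $\ge 0$: it vanishes when $x_v^{t+1}=x_v^{t-1}$, and otherwise $x_v^{t+1}-x_v^{t-1}$ and $h_v^t-k_v^t$ have the same sign. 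Hence $E$ is non-increasing along every trajectory.

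To conclude: $E$ takes finitely many values (the extended state space $\mathcal S$ is finite), so along any orbit it is eventually constant, and on a periodic orbit it is constant throughout; then $E(t+1)=E(t)$ forces each summand $(x_v^{t+1}-x_v^{t-1})(h_v^t-k_v^t)$ to be $0$. Modulo the tie case below, this gives $x_v^{t+1}=x_v^{t-1}$ for every vertex at every time on the orbit, and plugging that back into the bookkeeping identity yields $k_v^{t+1}=k_v^{t-1}$ as well, so $s^{t+1}=s^{t-1}$ and the orbit has length $1$ or $2$. I expect two steps to be the real work. First, pinning down exactly which correction terms make the threshold fluctuations telescope into the clean formula above: naively substituting the running threshold $k_v^t$ for a static one in the Goles--Olivos energy does \emph{not} work, as the four-vertex circle example in the remark already shows. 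Second, the tie case $h_v^t=k_v^t$, where a summand can vanish while $x_v^{t+1}\ne x_v^{t-1}$; I would settle it by replacing every threshold $k_v$ with $k_v-\tfrac12$, which changes neither the dynamics nor the bookkeeping identity (all fields $h_v^t$ are integers) but makes $h_v^t-k_v^t$ never zero, so that constancy of $E$ genuinely forces $x_v^{t+1}=x_v^{t-1}$. Finally, nothing uses that edge weights equal $1$: with an extra diagonal term $\sum_v w_{vv}x_v^tx_v^{t-1}$ adjoined to $E$ the same computation handles arbitrary symmetric weight matrices, which is the neural-network setting mentioned in the introduction.
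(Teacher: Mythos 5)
Your proof is correct, but it takes a genuinely different route from the paper's. The paper proves the proposition by extending the original Goles--Olivos algebraic argument as adapted in~\cite{Kuhlman:12}: on a hypothetical periodic orbit it introduces the anti-symmetric functional $L(z_i,z_j)$, partitions the support of each coordinate sequence into blocks $C_k$ classified by type $ab$, and shows that a coordinate period $\gamma_i\ge 3$ would force $\sum_j L(z_i,z_j)<0$, contradicting anti-symmetry; this is carried out directly for symmetric (neural-network) weights. You instead run the Lyapunov/energy version of the argument on consecutive pairs of states. The step you hedge on (``should collapse'') does go through: for the mixed map one has $k_v^t-k_v^{t-1}=x_v^t-x_v^{t-1}$ at every step (no clamping at the endpoints of $D_v$ can occur), so $c_v=k_v^t-x_v^t$ is conserved along every orbit, and substituting $k_v^t=c_v+x_v^t$ together with $x^2=x$ on $\{0,1\}$ reduces your threshold terms to $c_v x_v^{t\pm 1}$ and yields exactly $E(t+1)-E(t)=-\sum_v(x_v^{t+1}-x_v^{t-1})(h_v^t-k_v^t)\le 0$; your half-integer shift correctly eliminates the tie case $h_v^t=k_v^t$, and constancy of $E$ on a periodic orbit then forces $x^{t+1}=x^{t-1}$ and hence, via the conservation law, $s^{t+1}=s^{t-1}$, so $T\le 2$. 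As for what each approach buys: yours is self-contained and more elementary, and the conservation law you isolate in fact gives an even shorter proof, since $\sigma(x^t[v])\ge k_v^t$ is equivalent to $\sum_{u\sim v}x_u^t\ge c_v$, so the $x$-dynamics of $\F^\updownarrow$ coincides with that of a static symmetric threshold network with constant thresholds, the classical parallel period-two theorem of~\cite{Goles:80} applies, and the thresholds are slaved to the states; the paper's route, on the other hand, mirrors its bi-threshold analysis and is stated at once for arbitrary symmetric weights, where your closing aside needs a small repair (the diagonal correction should enter as $-\sum_v(a_{vv}-1)\,x_v^t x_v^{t-1}$ to offset the mismatch between the unit threshold increments and the diagonal field term), though this affects only the aside and not your proof of the stated proposition.
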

Clearly, $\F^\updownarrow$ may have fixed points. By the previous
remark, we see that $2$-cycles can occur for this class of maps.
\begin{proof}
This proof builds on a construction originally given by Goles \& Olivos
in~\cite{Goles:80} for neural networks and an extension of their proof that we
developed in~\cite{Kuhlman:12} that was needed to analyze the class of
\emph{bi-threshold} systems.

As in~\cite{Goles:80}, we define the extended threshold function
(neural networks)
\begin{equation}
\label{eq:genthreshold2}
\tau'_i(x_1,\dots,x_n) =
\begin{cases}
   1   & \text{ if } x_i=0 \text{ and }\sum\limits^n_{j=1}a_{ij}x_j \ge k  \\
   0   & \text{ if } x_i=1 \text{ and }\sum\limits^n_{j=1}a_{ij}x_j < k  \\
  x_i  &   \text{ otherwise,}
\end{cases}
\end{equation}
where $A = (a_{ij})_{ij}$ is a symmetric, real-valued matrix of
dimension $n\times n$. Here $n = \card{\vset[X]}$. Our result, which
we prove for $\tau'$, will follow by specializing to
the case where $a_{ij} = 1$ if $\{i,j\}\in\eset[X]$ and $a_{ij} = 0$
otherwise.

To start, let $s = (x,k) \in \mathcal{S}$ and assume that $s$ reaches
a periodic orbit of size $T$ under $\F^\updownarrow$. We let $\{z(0),
z(1), \ldots, z(T-1)\}$ denote the sequence of (extended) states on
this orbit, write $z_i$ for the projection of this sequence onto its
$i^{\text{th}}$ component, and let $\gamma_i$ be the period of
$z_i$. Clearly, $\gamma_i$ must divide $T$. Let $Z = \{z_1, z_2,
\ldots, z_n\}$ and define the function $L \colon Z \times
Z\rightarrow\mathbb{R}$ by
\begin{equation*}
  L(z_i, z_j) = a_{ij} \sum^{T-1}_{l=0} (x_j(l+1) - x_j(l-1)) x_i(l)\;,
\end{equation*}
with indices taken modulo~$T$. Note again that the $z_j$s are extended
system states. The operator $L$ has the following
properties (see~\cite{Goles:80,Kuhlman:12}):
\begin{itemize}
\item[($i$)] $L(z_i,z_j)+L(z_j,z_i) = 0$ for $i,j \in \{1,\ldots,n\}$
  (anti-symmetry).
\item[($ii$)] If $\gamma_i\le 2$ then $L(z_i,z_j)=0$ for $j \in
  \{1,\ldots,n\}$.
\end{itemize}
Let $z_i\in Z$ and suppose in the following that $\gamma_i \ge 3$. We
set
\begin{equation*}
  \supp(z_i) = \{ l\in\{0,\ldots,T-1\} : x_i(l) = 1\}\;,
\end{equation*}
and write $\mathcal{I}(l) = \{l, l+2, l+4, \ldots, l-4, l-2\}$. Next, set
\begin{equation*}
C_0 =
\begin{cases}
\varnothing,& \text{ if there is no }l_0\in\{0,\dots,T-1\} \text{ such
  that } \mathcal{I}(l_0) \subset \supp(z_i)\\
  \mathcal{I}(l_0),& \text{otherwise}.
\end{cases}
\end{equation*}
We define $C_1$ as the set
\begin{equation*}
  C_1 = \{l_1+2b \in \supp(z_i) : b = 0,1,\ldots, q_1\} \;,
\end{equation*}
where $l_1$ is the smallest index not in $C_0$ satisfying $x_i(l_1-2)
= 0$ and $q_1$ satisfies $x_i(l_1+2q_1+2) = 0$.
For $k \ge 2$ we define the sets $C_k$ by
\begin{equation*}
C_k = \{ l_k + 2b \in \supp(z_i) : b = 0,1,\dots,q_k\}\;,
\end{equation*}
where $l_k = l_{k-1} + r_k \pmod T \notin \{l_1,\ldots,l_{k-1}\}$ is
the smallest index for which $x_i(l_k-2) = 0$ and $q_k$ satisfies
$x_i(l_k+2q_k+2) = 0$.

Since $\gamma_i \ge 3$ by assumption, there always exists $l_1\in
\supp(z_i)$ for which $z_i(l_1-2) = 0$. This allows us to construct
$\mathcal{C} = \{C_0,\dots,C_p\}$ which is a partition of $\supp(z_i)$.

We will show that if $\gamma_i \ge 3$, then we are led to the conclusion that
\begin{equation*}
  \sum^n_{j=1}L(z_i,z_j)<0 \;.
\end{equation*}
As in~\cite{Kuhlman:12}, we can rewrite
\begin{align*}
\sum^n_{j=1} L(z_i, z_j) &= \sum^p_{k=0} \Psi_{ik}\;,
\end{align*}
where
\begin{equation}
\label{eq:Psi}
\Psi_{ik} = \sum^n_{j=1} a_{ij} \sum_{l\in C_k} (x_j(l+1)-x_j(l-1)) \;.
\end{equation}
If $C_0 = \emptyset$ then $\Psi_{i0} = 0$, and if $C_0 = \{l_0,l_0+2,
\ldots, l_0-2\}$ we have
\begin{equation*}
\sum_{l\in C_0} (z_j(l+1)-z_j(l-1)) = 0\;.
\end{equation*}
In other words, we always have $\Psi_{i0} = 0$. Assume $k>0$ in
the following.
From the assumption that $\gamma_i \ge 3$, there exists $C_k\ne
\varnothing$ such that $C_k = \{l_k,l_k+2,\ldots,l_k+2q_k\}$, so we
can re-write $\Psi_{ik}$ as
\begin{align*}
\Psi_{ik} &= \sum^n_{j=1}a_{ij}\sum_{s=0}^{q_k} (x_j(l_k+2s+1)-x_j(l_k+2s-1))\\
         &= \sum^n_{j=1}a_{ij} x_j(l_k+2q_k+1) - \sum^n_{j=1} a_{ij} x_j(l_k-1)\;.
\end{align*}

The dynamic threshold functions require that we distinguish the elements
of~$\mathcal{C}$ more carefully than what was needed in~\cite{Goles:80}. An
element $C\in \mathcal{C}$ is of \emph{type} $ab$ if $C = (l, l+2, l+4,\ldots,
k)$ and $x_{l-1} = a$ and $x_{k+1} = b$ where all indices are taken modulo
$T$. Here we write $m_{ab} = m_{ab}(\mathcal{C})$ for the number of elements
of $\mathcal{C}$ of type~$ab$. A key property needed here is that $m_{01} =
m_{10}$. A proof of this fact is given in~\cite{Kuhlman:12}.

In the following four cases we assume that the threshold of vertex $i$
at time $l_k-2$ is $k$.

\medskip
\noindent \emph{{\bfseries $C_k$ is of type $00$}}: in this case
$x_i(l_k-1) = 0$, $x_i(l_k) = 1$, $x_i(l_k+2q_k+1) = 0$ and
$x_i(l_k+2q_k+2) = 0$, which is only possible if
\begin{equation*}
\sum_{j=1}^n a_{ij} z_j(l_k-1) \ge k
\quad \text{and} \quad
\sum_{j=1}^n a_{ij} z_j(l_k+2q_k+1) < k \;,
\end{equation*}
which implies that $\Psi_{ik} < 0$.

\medskip
\noindent \emph{{\bfseries $C_k$ is of type $11$}}: this case is
completely analogous to the $00$ case, and
\begin{equation*}
\sum_{j=1}^n a_{ij} x_j(l_k-1) \ge k+1
\quad \text{and} \quad
\sum_{j=1}^n a_{ij} x_j(l_k+2q_k+1) < k+1 \;,
\end{equation*}
leading to $\Psi_{ik} < 0$.

\medskip
\noindent \emph{{\bfseries $C_k$ is of type $10$}}: here
$x_i(l_k-1) = 1$, $x_i(l_k) = 1$, $x_i(l_k+2q_k+1) = 0$ and
$x_i(l_k+2q_k+2) = 0$. This implies that
\begin{equation*}
\sum_{j=1} a_{ij}x_j(l_k-1) \ge k+1
\quad\text{and}\quad
\sum_{j=1} a_{ij}x_j(l_k+2q_k+1) < k \;,
\end{equation*}
so that $\Psi_{ik} < k - (k+1) = -1$.

\medskip
\noindent \emph{{\bfseries $C_k$ is of type $01$}}: this case is
essentially the same as the $10$ case, but here $\Psi_{ik} < k+1 - k =
1$.

\medskip

Using the above four cases, we now have
\begin{equation*}
\sum_{j=0}^n L(z_i, z_j) = \sum_{k=0}^p \Psi_{ik}
< 0
+ m_{00} \cdot 0
+ m_{11} \cdot 0
+ m_{10} (-1)
+ m_{01} (+1) = 0\;,
\end{equation*}
where the last equality follows from the fact that $m_{10} =
m_{01}$. As in the original proof, we see that the assumption
$\gamma_i\ge 3$ leads to a contradiction since the sum of all terms
$L(z_i, z_j)$ is zero by anti-symmetry. In effect, all the component
periods $\gamma_i$ for $1\le i \le n$ are bounded above by $2$ which
leads to the desired conclusion that $T \le 2$. This also concludes
the current proof as well as the proof of Theorem~\ref{thm:main}.
\end{proof}

Increasing, decreasing and mixed \etsds also have a certain
monotonicity property along orbits.  We say that a state transition
$(x,k) \mapsto (x',k')$ is \emph{unidirectional} if all non-trivial
transitions $x_v \mapsto x_v'$ with $x_v \ne x_v'$ are either all of
the form (a) $0\mapsto 1$ or (b) all of the form $1 \mapsto 0$. The
following result is another extension of a result
in~\cite{Barrett:06a} for standard threshold \sds maps to \etsds maps.
\begin{proposition}
Let $\phi \in \{\F^\updownarrow_\pi, \F^\uparrow_\pi, \F^\downarrow_\pi\}$ and
assume that for the state $s\in \mathcal{S}$ the transition $s \mapsto
\phi(s)$ is unidirectional. Then all transitions along the forward orbit
originating at $s$ are also unidirectional and in the same direction as the
one of $s \mapsto \phi(s)$.
\end{proposition}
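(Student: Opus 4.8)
The plan is to reduce everything to a single one--step lemma: \emph{if $s\mapsto\phi(s)$ is unidirectional of type~(a) (only $0\mapsto1$ flips occur), then $\phi(s)\mapsto\phi^2(s)$ is again unidirectional of type~(a)}. Granting this, the proposition follows by a trivial induction along the forward orbit, and the type--(b) case is obtained either by repeating the argument verbatim with the roles of $0$ and $1$ interchanged, or --- for $\F^\uparrow_\pi$ and $\F^\downarrow_\pi$ --- from the conjugacy $\psi$ of Proposition~\ref{prop:conj} (and Corollary~\ref{cor:conjugacy}), which intertwines the two maps and interchanges the two directions. Note that plain monotonicity of $\phi$, the device that settles the standard--threshold case, is \emph{not} available here: along a type--(a) orbit the thresholds increase, which a priori makes it easier for an up--flipped vertex to fall back to $0$.

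To prove the one--step lemma, fix $s^0=s$, $s^1=\phi(s^0)$ and write $\phi=F_{\pi_n}\circ\cdots\circ F_{\pi_1}$. Since each vertex is updated exactly once per sweep, the hypothesis ``$s^0\mapsto s^1$ is type~(a)'' yields two facts about the sweep producing $s^1$: the $x$--coordinate is nondecreasing along the whole chain of intermediate states, and for every vertex $v$ with $x^1_v=1$, at the instant $v$ was updated the count of ones in its closed neighborhood was $\ge k^0_v$, with $k^1_v=k^0_v$ if $v$ stayed $1$ and $k^1_v\le k^0_v+1$ if $v$ flipped $0\mapsto1$. Letting $\beta_v$ denote the number of neighbors of $v$ that equal $1$ in $s^1$, monotonicity of $x$ through the first sweep gives $\beta_v\ge k^0_v$ in the first sub--case's counterpart (flip--up) and $\beta_v\ge k^0_v-1$ in the stayed--up sub--case.

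Now run $\phi$ on $s^1$, processing $\pi_1,\dots,\pi_n$ in order, and argue by induction on the number of vertices already processed that no $1\mapsto0$ flip has yet taken place. When $v=\pi_j$ is updated: if $x^1_v=0$ no down--flip is possible; if $x^1_v=1$ then, since no down--flip has occurred so far and $v$ itself is untouched, every neighbor of $v$ that is $1$ in $s^1$ is still $1$, so the current count of ones in $v$'s closed neighborhood is $\ge 1+\beta_v$. The key cancellation is that the extra $1$ contributed by $v$'s own (still $=1$) state exactly pays for the possible increment $k^1_v\le k^0_v+1$: in both sub--cases the current count is $\ge k^1_v$, so $v$ is fixed and its threshold does not drop. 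This closes the inner induction, hence the second sweep has no $1\mapsto0$ flips, i.e.\ it is unidirectional of type~(a), and the outer induction along the orbit completes the proof.

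The main obstacle is precisely this threshold bookkeeping. One must set up the comparison between the intermediate states of two consecutive sequential sweeps correctly, use crucially that $\pi$ touches each vertex once (so states are ``frozen'' before and after their update), and keep in mind that for $\F^\uparrow_\pi$ a single sweep may itself contain $1\mapsto0$ flips --- so the hypothesis that \emph{this} transition is unidirectional is genuinely used, not just the form of the update rule. Once the observation ``a $0\mapsto1$ flip adds one permanent $1$ to the vertex's closed neighborhood, offsetting the unit rise of its threshold'' is in hand, the estimates are routine.
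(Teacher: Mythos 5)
Your proposal is correct and is essentially the paper's argument: the paper picks the $\pi$-minimal vertex flipping $1\mapsto 0$ in the second sweep and derives a contradiction by comparing its neighborhood $1$-counts at its two consecutive updates, "regardless of whether the threshold was mapped to $k_i+1$ or not" --- which is exactly your induction along the second sweep together with the key cancellation that an up-flip adds a permanent $1$ to the closed neighborhood offsetting the unit threshold increase. Your write-up is merely more explicit (uniform treatment of all three maps via $k^1_v\le k^0_v+1$, explicit $\beta_v$ bounds, and the symmetry/conjugacy remark for the downward direction, which the paper dismisses as "analogous").
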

\begin{proof}
Let $\phi = \F^\updownarrow_\pi$, let $s, s', s''\in \mathcal{S}$ and
assume that $s \mapsto s'$ unidirectionally with all transitions going
from~$0$ to~$1$. Assume next that $s' \mapsto s''$ but not
unidirectionally, and let~$i$ denote the vertex minimal with respect
to $\pi$ for which~$1$ maps to~$0$ in the second transition. Since the
previous transition was unidirectional ($0\mapsto 1$), we must have
$n_1(s,i) < n_1(s',i)$ at the time of the vertex update for $i$ in the
second transition. This yields a contradiction regardless of whether
the threshold $k_i$ was mapped to $k_i+1$ or not in the first
transition. The case where $s\mapsto s'$ unidirectionally with all
transitions going from $1$ to $0$ is analogous.
\end{proof}


\section{Structure and Enumeration of Fixed Point}
\label{sec:enumeration}

This section is concerned with enumeration and characterization of
fixed points of \etgds maps. Here we note that for a fixed sequence of
vertex functions over a given graph $X$, the set of fixed points is
the same whether we use a parallel or a permutation sequential
update~\cite{Mortveit:07}. That is a general and well-known fact. We
cover three graph classes: trees, the \emph{path graph} $\Path_n$ on
$n$ vertices (precursor for result on trees in next section), and the
\emph{circle graph} on $n$ vertices denoted by $\Circ_n$. However we
first have the following result:
\begin{proposition}
\label{prop:fix}
For any graph $X$ and any $\pi\in S_X$ we have
\begin{equation}
\label{eq:fixset}
 \Fix( \F^\uparrow ) =
 \Fix( \F^\downarrow ) =
 \Fix( \F^\updownarrow ) =
 \Fix( \F^\uparrow_\pi ) =
 \Fix( \F^\downarrow_\pi ) =
 \Fix( \F^\updownarrow_\pi ) \;.
\end{equation}
\end{proposition}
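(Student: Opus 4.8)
The plan is to show all six fixed-point sets coincide by exploiting the structural characterization of what it means to be a fixed point of an \etgds map. First I would observe that for any of the six maps, a state $s = (x,k)$ is fixed if and only if (a) the $x$-component is stationary, i.e. $\tau_{k_v,d(v)+1}(x[v]) = x_v$ for every vertex $v$, and (b) the $k$-component is stationary. The key point is that condition (b) is automatic once (a) holds: in each of the three threshold-update rules $g^\uparrow$, $g^\downarrow$, $g^\updownarrow$, the threshold $k_v$ changes only when $x_v$ actually transitions (from $0$ to $1$, from $1$ to $0$, or either), so if no $x_v$ changes then no $k_v$ changes. Hence for each of the six maps, $\Fix$ is exactly the set of extended states $s=(x,k)$ for which $\tau_{k_v,d(v)+1}(x[v]) = x_v$ for all $v$ — and this last condition makes no reference to whether the update is parallel or sequential, nor to which of the three $g$-variants is used.

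The second step handles the parallel-versus-sequential distinction, which the paper already grants as a general fact (citing \cite{Mortveit:07}): for a fixed ensemble of vertex functions over $X$, $\Fix(\F) = \Fix(\F_\pi)$ for every $\pi \in S_X$, since $\F_\pi(s) = s$ forces each vertex update to be trivial (the minimal vertex with respect to $\pi$ whose state would change already gives a contradiction), which is equivalent to $\F(s) = s$. Applying this with the increasing, decreasing, and mixed vertex-function ensembles respectively gives $\Fix(\F^\uparrow) = \Fix(\F^\uparrow_\pi)$, $\Fix(\F^\downarrow) = \Fix(\F^\downarrow_\pi)$, and $\Fix(\F^\updownarrow) = \Fix(\F^\updownarrow_\pi)$. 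Combining with the first step — which shows all three of $\Fix(\F^\uparrow)$, $\Fix(\F^\downarrow)$, $\Fix(\F^\updownarrow)$ equal the common set $\{s : \tau_{k_v,d(v)+1}(x[v]) = x_v \ \forall v\}$ — yields the chain of six equalities in~\eqref{eq:fixset}.

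I do not anticipate a serious obstacle here; the proposition is essentially a bookkeeping consequence of the fact that the threshold component is inert at a state where the binary component is already stationary. The only point requiring a small amount of care is verifying that claim cleanly for all three $g$-variants simultaneously — in particular making sure that in the mixed case $g^\updownarrow$ one cannot have $k_v$ change while $x_v$ stays fixed, which follows because the conditions triggering a threshold change in $g^\updownarrow$ (namely $x_v = 0 \land \sigma(x[v]) \ge k_v$, or $x_v = 1 \land \sigma(x[v]) < k_v$) are exactly the conditions under which $\tau_{k_v,d(v)+1}(x[v]) \ne x_v$. So the threshold moves precisely when the state does, and the equivalence is tight.
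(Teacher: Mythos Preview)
Your proposal is correct and follows essentially the same approach as the paper: both arguments reduce the fixed-point condition to the vertex-local requirement $(x_v = 0 \land \sigma(x[v]) < k_v)$ or $(x_v = 1 \land \sigma(x[v]) \ge k_v)$, which is identical for all three threshold-update rules and independent of the update scheme. The paper phrases this via the language of compatible \emph{local fixed points}, while you argue more directly that the threshold component is inert whenever the binary component is stationary; these are the same observation, and your explicit check for $g^\updownarrow$ spells out exactly the step the paper leaves as ``straightforward to see.''
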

\begin{proof}
For a graph $X$, a sub-configuration $x[v]$ is a \emph{local fixed
  point} if $f_v(x[v]) = x_v$. Two local fixed points $\xi_i$ and
$\xi_j$ are \emph{compatible} whenever they agree on the intersection
$n[i] \cap n[j]$. Clearly, there is a bijective correspondence between
the set of fixed points and the set of local fixed point sequences
$\xi = (\xi_1, \ldots, \xi_n)$ whose components are pairwise
compatible. It is straightforward to see that for $f^\uparrow_i$,
$f^\downarrow_i$ and $f^\updownarrow_i$ to have $\xi_i$ as a local
fixed point, necessary and sufficient conditions are in all cases
\begin{equation}
\label{eq:fix_cond}
\bigl( x_i = 0 \text{ and } \sigma( x[i] ) < k_i \bigr)
\quad\text{or}\quad
\bigl( x_i = 1 \text{ and } \sigma( x[i] ) \ge k_i \bigr) \;,
\end{equation}
and the proof follows.
\end{proof}
Of course, the transient dynamics for these system classes will
generally differ. The following result further characterizes the set
$\Fix( \F^\uparrow )$ appearing in~\eqref{eq:fixset}.
\begin{lemma}
\label{lem:state1}
For any graph $X$ and any vertex $v\in\vset[X]$ we have $x_v = 1$ for
precisely half of the elements of the set $\Fix( \F^\uparrow )$.
\end{lemma}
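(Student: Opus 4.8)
The plan is to exhibit an explicit involution on the fixed point set $\Fix(\F^\uparrow)$ that flips the $x_v$-coordinate of the distinguished vertex $v$, thereby pairing up fixed points with $x_v = 0$ and those with $x_v = 1$. The natural candidate is a ``local'' version of the conjugating bijection $\psi$ from Proposition~\ref{prop:conj}: define $\Phi\colon \mathcal{S}\to\mathcal{S}$ by leaving every coordinate $u \ne v$ unchanged and setting $\Phi(s)_v = (x_v + 1 \bmod 2,\ d(v) - k_v + 2)$, exactly as in~\eqref{eq:psiconj} but applied only at $v$. As in the proof of Proposition~\ref{prop:conj}, the computation $(d(v) - (d(v) - k_v + 2) + 2) = k_v$ shows $\Phi$ is its own inverse, hence a bijection of $\mathcal{S}$, and it manifestly toggles $x_v$.

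The key step is to check that $\Phi$ restricts to a bijection of $\Fix(\F^\uparrow)$; once that is done, $\Phi$ is a fixed-point-free involution on $\Fix(\F^\uparrow)$ interchanging the subset $\{s : x_v = 1\}$ with the subset $\{s : x_v = 0\}$, which forces the two subsets to have equal cardinality and gives the claim. By Proposition~\ref{prop:fix} a state $s = (x,k)$ lies in $\Fix(\F^\uparrow)$ iff every vertex $u$ satisfies the local condition~\eqref{eq:fix_cond}: $x_u = 0$ with $\sigma(x[u]) < k_u$, or $x_u = 1$ with $\sigma(x[u]) \ge k_u$. For $u \notin n[v]$ nothing changes, so the condition persists. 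For $u = v$ one uses $\sigma(x[v]) + \sigma(x'[v]) = d(v) + 1$ (where $x'$ is the toggled state) together with $k'_v = d(v) - k_v + 2$: if $v$ was a ``$1$-type'' local fixed point, $\sigma(x[v]) \ge k_v$, then $\sigma(x'[v]) = d(v) + 1 - \sigma(x[v]) \le d(v) + 1 - k_v = k'_v - 1 < k'_v$, so $v$ becomes a valid ``$0$-type'' local fixed point, and symmetrically in the other direction. For a neighbor $u \in n[v]\setminus\{v\}$, toggling $x_v$ changes $\sigma(x[u])$ by exactly $\pm 1$; the point is that the direction of this change is the \emph{same} for every neighbor and is correlated with which local-fixed-point type $v$ had, and one checks that this shift is always in the ``safe'' direction so that~\eqref{eq:fix_cond} at $u$ is preserved. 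Concretely: if $x_v: 1 \to 0$ then $\sigma(x[u])$ drops by $1$ for each neighbor $u$; a $0$-type neighbor has $\sigma(x[u]) < k_u$ still after decreasing, and a $1$-type neighbor had $\sigma(x[u]) \ge k_u$ — here one must verify the drop does not violate the inequality, which is exactly where the combinatorial identity relating $\sigma(x[v])$, $k_v$ and the neighbor counts is needed.

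I expect the main obstacle to be precisely this last point — showing the local fixed-point condition at the neighbors $u$ of $v$ is preserved — since a single toggle at $v$ could in principle push some neighbor $u$ across its threshold. The resolution should come from observing that when $s\in\Fix(\F^\uparrow)$ the state $s$ is a \emph{global} fixed point, so the values of $x_v$, $k_v$ and the states of $v$'s neighbors are already tightly constrained by~\eqref{eq:fix_cond} at $v$ itself; in particular the sign of the shift in $\sigma(x[u])$ is dictated by whether $x_v$ was $1$ or $0$, and the threshold inequalities at the neighbors have the matching slack. An alternative, possibly cleaner route to the same conclusion would be to invoke Corollary~\ref{cor:conjugacy} and Proposition~\ref{prop:fix} to identify $\Fix(\F^\uparrow)$ with a symmetric structure under the global map $\psi$, but the local involution $\Phi$ is the more direct argument and I would carry that one out, falling back on the global $\psi$ only if the neighbor-preservation bookkeeping becomes unwieldy.
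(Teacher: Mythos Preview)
Your primary approach---the local involution $\Phi$ that toggles only the $v$-coordinate---does not work. The ``neighbor-preservation'' step you flag as the main obstacle is a genuine obstruction, not just bookkeeping: there is no combinatorial identity that forces the threshold inequality at a neighbor $u$ to survive a single toggle at $v$.

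Here is a concrete counterexample. Take $X = \Path_2$, so $d(1)=d(2)=1$, and consider the fixed point $s = \bigl((1,1),(1,2)\bigr)$: at vertex~$1$ we have $\sigma(x[1])=2\ge 1$, and at vertex~$2$ we have $\sigma(x[2])=2\ge 2$, so~\eqref{eq:fix_cond} holds at both. Applying your $\Phi$ at $v=1$ gives $\Phi(s)=\bigl((0,2),(1,2)\bigr)$. Vertex~$1$ is still locally fixed ($\sigma=1<2$), but at vertex~$2$ we now have $x_2=1$ with $\sigma(x[2])=1<2=k_2$, violating~\eqref{eq:fix_cond}. So $\Phi(s)\notin\Fix(\F^\uparrow)$, and $\Phi$ does not restrict to a map on the fixed-point set. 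The intuition ``the threshold inequalities at the neighbors have the matching slack'' is simply false: the fixed-point condition at $v$ constrains the \emph{aggregate} of its neighbors' states, not the slack in each neighbor's own inequality.

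Your fallback---use the \emph{global} map $\psi$ together with Proposition~\ref{prop:fix} and the conjugacy---is exactly the paper's proof, and it works precisely because $\psi$ flips \emph{every} $x_u$ simultaneously, so each $\sigma(x[u])$ is transformed consistently with the new threshold $d(u)-k_u+2$ at $u$ itself. The paper's argument is two lines: $\psi$ restricted to $\Fix(\F^\uparrow)$ is a bijection onto $\Fix(\F^\downarrow)$ by the conjugacy, these two sets coincide by Proposition~\ref{prop:fix}, and $\psi$ swaps $x_v=1$ with $x_v=0$. You should abandon the local $\Phi$ and carry out this global argument instead.
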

\begin{proof}
The map $\psi$ in~\eqref{eq:psiconj} induces a bijection $\psi' \colon
\Fix( \F^\uparrow ) \longrightarrow \Fix( \F^\downarrow )$ by
restriction. The result now follows from that that ($i$) $\Fix(
\F^\uparrow ) = \Fix( \F^\downarrow )$ and ($ii$) the image of any
fixed point $s$ for which $x_v = 1$ is a fixed point with $x_v = 0$
and vice versa.
\end{proof}
A practical consequence of this results is that it can simplify fixed
point enumeration.


\subsection{Fixed Points of \etgds over $\Path_n$.}
\label{sec:path}

In this section we enumerate the fixed points of dynamic threshold \gds over
$\Path_n$. Both the result and its proof are relevant for the argument
covering the case of trees in Section~\ref{sec:trees}. Let $\Fib{n}$ denote
the $n^{\text{th}}$ Fibonacci number ($\Fib{0} = 0$, $\Fib{1} = 1$ and
$\Fib{n} = \Fib{n-1} + \Fib{n-2}$ for $n\ge 2$). We will also write $\fib(n)$
for $\Fib{n}$.
\begin{proposition}
\label{prop:npath}
For $X = \Path_n$ we have $\card{\Fix(\F^\uparrow)} = 2\Fib{3n-1}$\;.
\end{proposition}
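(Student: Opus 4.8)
The plan is to reduce the count to a boundary-weighted transfer-matrix sum on the path and then recognize the $2\times2$ transfer matrix as a conjugate of the cube of the Fibonacci matrix $M=\bigl(\begin{smallmatrix}1&1\\1&0\end{smallmatrix}\bigr)$, whose powers carry the Fibonacci numbers.

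First I would invoke the fixed-point characterization~\eqref{eq:fix_cond}: a state $s=(x,k)$ lies in $\Fix(\F^\uparrow)$ precisely when, at every vertex $i$, either $x_i=0$ and $\sigma(x[i])<k_i$, or $x_i=1$ and $\sigma(x[i])\ge k_i$. Since $\sigma(x[i])$ depends only on the binary values of $i$ and its neighbours (not on any threshold), once the binary tuple $x\in\{0,1\}^n$ is fixed the constraint on $k_i$ involves $k_i$ alone, so the number of $k$ making $(x,k)$ a fixed point is $\prod_{i=1}^n N_i(x)$, where $N_i(x)$ is the number of $k_i\in D_i$ satisfying the local condition. A routine case check — endpoint versus interior vertex, crossed with $x_i=0$ versus $x_i=1$ — gives in every case $N_i(x)=1+\card{\{\,j\sim i \mid x_j=x_i\,\}}$, i.e.\ one plus the number of neighbours of $i$ that agree with $i$. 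Pinning down this formula cleanly is really the crux; the rest is bookkeeping. Thus
\begin{equation*}
  \card{\Fix(\F^\uparrow)}=\sum_{x\in\{0,1\}^n}\ \prod_{i=1}^n\bigl(1+\card{\{\,j\sim i \mid x_j=x_i\,\}}\bigr).
\end{equation*}

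Next I would recode the binary tuples on $\Path_n$ via the edge-agreement bits: set $y_i=1$ if $x_i=x_{i+1}$ and $y_i=0$ otherwise, for $i=1,\dots,n-1$. The map $x\mapsto(x_1,y_1,\dots,y_{n-1})$ is a bijection $\{0,1\}^n\to\{0,1\}\times\{0,1\}^{n-1}$, and the weight $\prod_iN_i(x)$ depends only on $(y_1,\dots,y_{n-1})$: the two degree-one endpoints contribute $1+y_1$ and $1+y_{n-1}$, while each interior vertex $i$ contributes $1+y_{i-1}+y_i$. Summing over the free bit $x_1$ yields $\card{\Fix(\F^\uparrow)}=2\,S_{n-1}$ with $S_m=\sum_{y\in\{0,1\}^m}(1+y_1)(1+y_m)\prod_{i=2}^{m}(1+y_{i-1}+y_i)$. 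This is exactly a transfer-matrix sum: with boundary vector $v=(1,2)^\top$ (its entry in coordinate $y$ being $1+y$) and bond matrix $T_{ab}=1+a+b$, that is $T=\bigl(\begin{smallmatrix}1&2\\2&3\end{smallmatrix}\bigr)$, one has $S_m=v^\top T^{m-1}v$. As a sanity check, $m=1$ gives $S_1=v^\top v=5=\Fib{5}$, consistent with the ten fixed points found for $\Path_2$ in Example~\ref{ex:1}.

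Finally I would note that $T=PM^3P$, where $M=\bigl(\begin{smallmatrix}1&1\\1&0\end{smallmatrix}\bigr)$ so $M^3=\bigl(\begin{smallmatrix}3&2\\2&1\end{smallmatrix}\bigr)$ and $P=\bigl(\begin{smallmatrix}0&1\\1&0\end{smallmatrix}\bigr)$ is the swap, and that $Pv=(2,1)^\top=M^2e_1$ with $e_1=(1,0)^\top$. Using $P^2=I$ and the symmetry of $M$,
\begin{equation*}
  S_m=v^\top T^{m-1}v=(Pv)^\top M^{3(m-1)}(Pv)=e_1^\top M^2\,M^{3m-3}\,M^2\,e_1=e_1^\top M^{3m+1}e_1=\Fib{3m+2},
\end{equation*}
since the $(1,1)$ entry of $M^k$ is $\Fib{k+1}$. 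Hence $S_{n-1}=\Fib{3n-1}$ and $\card{\Fix(\F^\uparrow)}=2\Fib{3n-1}$ (the degenerate case $n=1$ is immediate: $N_1\equiv1$, giving two fixed points $=2\Fib{2}$). The only genuine obstacles are the opening identity for $N_i(x)$ and spotting that $T$ is conjugate to $M^3$ (equivalently, that its eigenvalues are $2\pm\sqrt5$, i.e.\ $\varphi^3$ and $-\varphi^{-3}$); as an alternative to the conjugation one may instead check directly that $S_m$ obeys the step-three recursion $S_{m+1}=4S_m+S_{m-1}$ with $S_1=5$, $S_2=21$, which is the recursion satisfied by $\Fib{3m+c}$.
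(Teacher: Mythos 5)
Your argument is correct, but it takes a genuinely different route from the paper's. The paper proves the formula by an extension/charging induction on the path length: fixed points over $\Path_{n+1}$ are classified according to the last two extended vertex states $(s_{n-1},s_n)$, each case is extended in all admissible ways (Figure~\ref{fig:5}), and charging the extensions to fixed points over $\Path_n$ and $\Path_{n-1}$ yields the recursion $\Fix(n+1)=5\,\Fix(n-1)+4\bigl[\Fix(n)-\Fix(n-1)\bigr]$, which together with the base values $\Fix(2)=10=2\Fib{5}$ and $\Fix(3)=42=2\Fib{8}$ gives $2\Fib{3n-1}$ by induction. You instead sum out the thresholds first: your key identity, that for a fixed binary configuration $x$ the number of admissible thresholds at vertex $i$ is $1+\card{\{j\sim i \mid x_j=x_i\}}$, does check out in all cases (for $x_i=1$ the condition $k_i\le \sigma(x[i])$ admits $1+m_1$ values, for $x_i=0$ the condition $k_i>\sigma(x[i])$ admits $d(i)+1-m_1=1+m_0$ values, uniformly for endpoints and interior vertices since $D_i=\{1,\dots,d(i)+1\}$), and the subsequent recoding by edge-agreement bits, the $2\times 2$ transfer matrix $T=\bigl(\begin{smallmatrix}1&2\\2&3\end{smallmatrix}\bigr)$, and its identification with a conjugate of $M^3$ are all sound; your sanity checks $S_1=5$, $S_2=21$ agree with the paper's $\Fix(2)=10$, $\Fix(3)=42$. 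Your route is in fact closer in spirit to the paper's own treatment of $\Circ_n$ (Proposition~\ref{prop:circ}), but with a drastically smaller matrix than the $144\times144$ one used there, precisely because the threshold degrees of freedom are eliminated before the transfer matrix is set up; moreover the weight formula $\sum_x\prod_i\bigl(1+\card{\{j\sim i\mid x_j=x_i\}}\bigr)$ is valid on an arbitrary graph, so it would also feed naturally into the circle-graph count and the tree algorithm of Section~\ref{sec:trees}. What the paper's charging argument buys in exchange is an explicit combinatorial picture of how fixed points over $\Path_{n+1}$ arise from those over shorter paths, with no linear algebra and with intermediate structure (the six boundary cases) that is reused in the tree section.
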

\begin{proof}
We proceed in a recursive manner constructing the fixed points over
$\Path_{n+1}$ from those over $\Path_n$. There are two disjoint sets
of fixed points to consider: ($i$) the fixed points over $\Path_{n+1}$
whose restrictions are fixed points over $\Path_n$, and ($ii$) their
complement. Here we derive and solve a recursion relation where the
second class of fixed points are charged to the first class. In this
way, we can directly relate fixed points in the~$(n+1)$~case to
the~$n$~case and~$(n-1)$-case.

Clearly, the set of fixed points $s = (s_1, \ldots, s_n)$ over
$\Path_n$ fall into six disjoint sets depending on the possible values
for $(s_{n-1}, s_n)$ which are
$\bigl( (0,*), (0,1) \bigr)$,
$\bigl( (0,*), (0,2) \bigr)$,
$\bigl( (0,*), (1,1) \bigr)$,
$\bigl( (1,*), (0,2) \bigr)$,
$\bigl( (1,*), (1,1) \bigr)$, and
$\bigl( (1,*), (1,2) \bigr)$
where $*$ denotes any element of $\{1,2,3\}$. Figure~\ref{fig:5} shows
the extended and charged fixed points in each of these six cases.

\begin{figure}[ht]
\centerline{\includegraphics[scale=1]{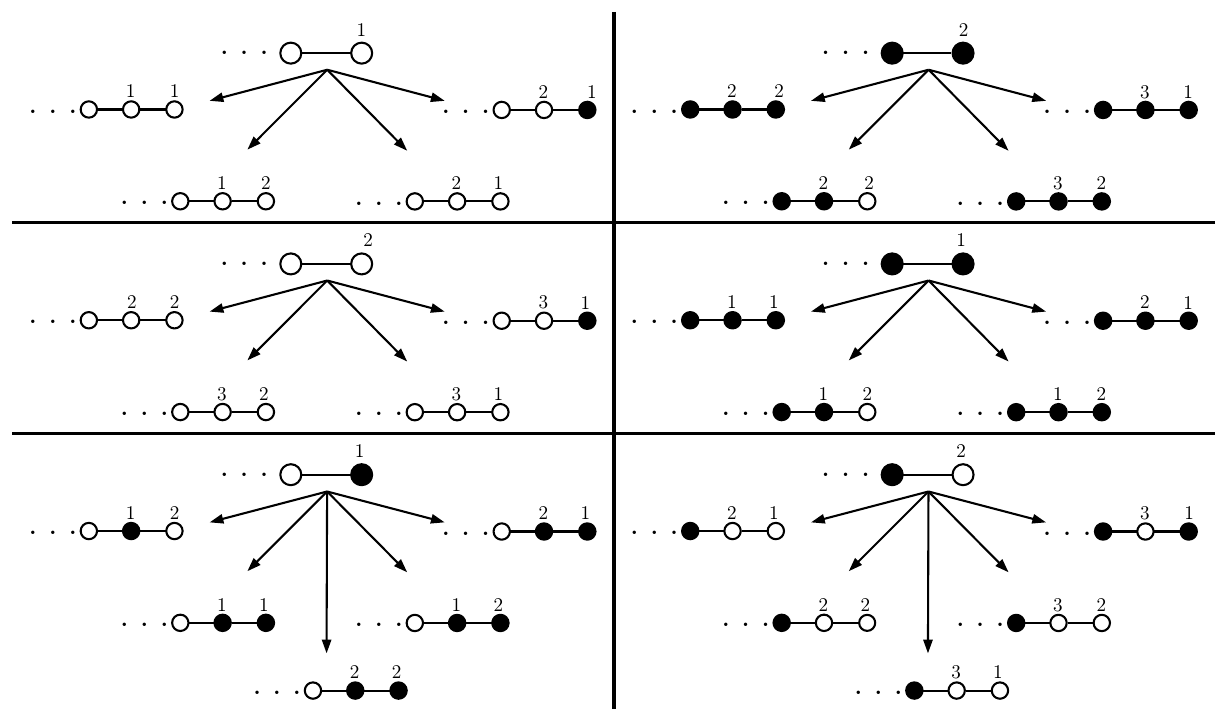}}
\caption{The extension \& charging scheme used in the proof of
  Proposition~\ref{prop:npath}. In the diagram, a filled (resp. empty)
  circle encodes a state that is~$1$ (resp.~$0$) while numbers give
  the threshold value.}
\label{fig:5}
\end{figure}
We set $\Fix(n) = \card{\Fix(\F^\uparrow)}$. Here, the two cases displayed on
the last row in Figure~\ref{fig:5} give rise to five fixed points. Each
fixed point over $\Path_{n-1}$ can be adjoined to precisely one of these cases, so
the number of fixed points over $\Path_{n+1}$ accounted for by these is $5
\Fix(n-1)$. Each of the remaining cases account for four fixed points and
there are $\Fix(n)-\Fix(n-1)$ of these.

Direct computations give $\Fix(2) = 10 = 2\Fib{5}$, $\Fix(3) = 42 = 2
\Fib{8}$ and $\Fix(4) = 2 \Fib{11}$. We proceed by induction and
assume that the $\Fix(n) = 2\Fib{3n-1}$ holds for $n\ge 4$. Then
\begin{eqnarray*}
\Fix(n+1)
  & = & 5\Fix(n-1) + 4\left[\Fix(n)-\Fix(n-1)\right] \\
  & = & 5\cdot 2\Fib{3n-4} + 4 \left[2\Fib{3n-1}-2\Fib{3n-4}\right] \\
  & = & 2\Fib{3(n+1)-1}\;,
\end{eqnarray*}
and we are done.
\end{proof}

{\bfseries Scaling:} It is known that $\Fib{n}$ can be determined as the
integer closest to $\varphi^n/\sqrt{5}$ where $\varphi$ is the golden
ratio. Thus, we see that the number of fixed points is roughly given by
\begin{equation*}
\label{eq:path}
\bigl( 1 - {1}/\sqrt{5} \bigr) \bigl(\varphi^3\bigr)^n
 = \bigl( 1 - {1}/\sqrt{5}\bigr) \bigl(2+\sqrt{5}\bigr)^n \;.
\end{equation*}
For comparison, we note that for $\Path_n$ there are $(4/9)\cdot
6^n$ states in phase space.


\subsection{Fixed Points of \etgds over $\Circ_n$.}
\label{sec:circ}

Specializing the proof of Proposition~\ref{prop:fix}, we here derive a
recursion relation for the number of fixed points for \etsds/\etgca
over the graph $\Circ_n$. Let $\Luc_n$ denote the $n^{\text{th}}$ Lucas
number ($\Luc_0 = 2$, $\Luc_1 = 1$ and $\Luc_n = \Luc_{n-1} +
\Luc_{n-2}$ for $n\ge 2$).
\begin{proposition}
\label{prop:circ}
For $X = \Circ_n$ we have $\card{\Fix(\F^\uparrow)} = 2 + \Luc_{3n-1}$.
\end{proposition}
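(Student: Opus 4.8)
The plan is to count the common fixed-point set directly. By Proposition~\ref{prop:fix}, $\Fix(\F^\uparrow)$ consists of the extended states $s=(x,k)$ satisfying the local condition~\eqref{eq:fix_cond} at every vertex. Since $\Circ_n$ is $2$-regular, each $D_v=\{1,2,3\}$ and the condition at $v$ constrains only $x_{v-1},x_v,x_{v+1}$ and $k_v$; hence, for a fixed binary configuration $x\in\{0,1\}^n$, the number of admissible threshold vectors $k$ factors over the vertices, the factor at $v$ being
\[
w(x_{v-1},x_v,x_{v+1})=
\begin{cases}
3-(x_{v-1}+x_{v+1}), & x_v=0,\\
1+(x_{v-1}+x_{v+1}), & x_v=1,
\end{cases}
\]
which is precisely the local threshold count behind the six cases already used for $\Path_n$ (cf.\ Figure~\ref{fig:5}). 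Thus $\card{\Fix(\F^\uparrow)}=\sum_{x\in\{0,1\}^n}\prod_{v}w(x_{v-1},x_v,x_{v+1})$, with indices read modulo $n$.

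Next I would read the right-hand side as a cyclic transfer-matrix trace. Let $T$ be the $4\times 4$ matrix indexed by consecutive pairs $(a,b)\in\{0,1\}^2$, with $T_{(a,b),(b',c)}=w(a,b,c)$ when $b=b'$ and $0$ otherwise; then $\card{\Fix(\F^\uparrow)}=\operatorname{tr}(T^{n})$. Because $w$ depends on its outer arguments only through their sum, it is symmetric in them, so this identity remains valid in the degenerate case $n=3$, where all neighborhoods $n[v]$ coincide. It then remains only to compute the spectrum of $T$.

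For the spectral step I would exploit the involution $x\mapsto 1-x$. The permutation $J$ it induces on the four pair-states (swapping $(0,0)\leftrightarrow(1,1)$ and $(1,0)\leftrightarrow(0,1)$) commutes with $T$, so $T$ restricts to the two eigenspaces of $J$ and decomposes into two $2\times 2$ blocks. The computation will show that the block on the $(+1)$-eigenspace has characteristic polynomial $\lambda^2-4\lambda-1$, with roots $\varphi^3=2+\sqrt5$ and $\psi^3=2-\sqrt5$ (where $\psi=(1-\sqrt5)/2$), while the block on the $(-1)$-eigenspace has characteristic polynomial $(\lambda-1)^2$ and is a single non-diagonalizable Jordan block; equivalently $\det(T-\lambda I)=(\lambda-1)^2(\lambda^2-4\lambda-1)$. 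Since the trace of the $n$-th power of a $2\times 2$ Jordan block with eigenvalue $1$ equals $2$ for every $n$, this yields $\operatorname{tr}(T^{n})=2+\varphi^{3n}+\psi^{3n}$; rewriting $\varphi^{3n}+\psi^{3n}$ in Lucas numbers completes the count. A variant that avoids eigenvectors: Cayley--Hamilton gives a linear recursion for $a_n:=\card{\Fix(\F^\uparrow)}$ over $\Circ_n$ (concretely, $a_n-2$ obeys $b_n=4b_{n-1}+b_{n-2}$, the recursion satisfied by $\varphi^{3n}+\psi^{3n}$), and the two base values $a_3,a_4$, found by direct enumeration, close the induction.

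The one step that needs genuine care is the spectral one: the eigenvalue $1$ of $T$ is defective, so $T$ is not diagonalizable --- this does not affect the trace, hence not the final formula, but it must be accounted for in passing from the characteristic polynomial to $\operatorname{tr}(T^{n})=\sum_i\lambda_i^{\,n}$. If one instead prefers to mirror the charging/extension scheme of Proposition~\ref{prop:npath}, the corresponding obstacle is the cyclic boundary when building $\Circ_{n+1}$ from $\Circ_n$; the clean workaround is to cut one edge, sort the fixed configurations by the extended states of that edge's two endpoints, and sum the resulting $\Path$-type counts --- which is exactly the transfer-matrix trace above.
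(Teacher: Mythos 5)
Your argument is correct, and it takes a genuinely leaner route than the paper. The paper also uses a transfer-matrix count, but it works directly with the $6^3=216$ extended local states $\xi_i$ (of which $144$ are local fixed points), builds the $144\times 144$ compatibility matrix, extracts its characteristic polynomial by computer algebra, and then solves the resulting order-four recursion from four directly computed initial values. Your key extra reduction --- observing that in \eqref{eq:fix_cond} the threshold $k_v$ enters only the condition at $v$ itself, so for each binary configuration $x$ the admissible thresholds factor vertex-by-vertex with weight $w(x_{v-1},x_v,x_{v+1})$ --- collapses this to a $4\times 4$ pair-transfer matrix whose spectrum you compute by hand via the complementation symmetry; indeed the nonzero part of the paper's $144$-degree characteristic polynomial, $\lambda^4-6\lambda^3+8\lambda^2-2\lambda-1$, factors exactly as your $(\lambda-1)^2(\lambda^2-4\lambda-1)$, and your care with the defective eigenvalue $1$ (trace contribution $2$ for every $n$) is exactly what produces the additive constant. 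Two small remarks. First, your count gives $2+\Luc_{3n}$, which agrees with the paper's own displayed formula \eqref{eq:circ} and with $L_3=78$, $L_4=324$; the index $3n-1$ in the proposition statement is a slip in the paper, not in your derivation. Second, the ``variant avoiding eigenvectors'' is not quite self-contained as stated: Cayley--Hamilton for $T$ only yields the order-four recursion $a_n=6a_{n-1}-8a_{n-2}+2a_{n-3}+a_{n-4}$, so closing it needs either four initial values or precisely the Jordan-block information that $a_n-2=\varphi^{3n}+\psi^{3n}$; with only $a_3,a_4$ you cannot rule out a term linear in $n$ coming from the double root at $1$. Since your primary argument via $\operatorname{tr}(T^n)$ is complete, this only affects the aside.
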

\begin{proof}
The approach, which at its core is based on the matrix transfer
method~\cite{Stanley:00}, patches the local fixed points to construct
global fixed points, see~\cite[Chap. 5]{Mortveit:07}. In the case of
$\Circ_n$, local fixed points are of the form
\begin{equation*}
\xi_i = \bigl( (x_{i-1}, x_i, x_{i+1}), (k_{i-1}, k_i, k_{i+1}) \bigr) \;,
\end{equation*}
with indices taken modulo $n$. Such a tuple $\xi_i$ is a local fixed
point for $\F^\uparrow_\pi$ if $f^\uparrow_i( \xi_i ) = (x_i,
k_i)$. Moreover, two local fixed points $\xi_i$ and $\xi_j$ are
\emph{compatible} whenever they agree on their intersection $n[i] \cap
n[j]$ which is trivial unless $i$ and $j$ are $\le 2$ apart modulo
$n$. We write $\xi_i \triangleleft \xi_{i+1}$ for compatible and
consecutive local fixed points. Clearly, there is a bijective
correspondence between the set of sequences of the form $\xi = (\xi_1,
\ldots, \xi_n)$ satisfying the conditions
\begin{equation*}
\xi_1 \triangleleft \xi_2 \triangleleft \cdots \triangleleft \xi_n
\triangleleft \xi_1
\end{equation*}
and the set of fixed points of $\F^\uparrow_\pi$ over $\Circ_n$. In
the case of $\Circ_n$ there are $6$ possible vertex states and $6^3 =
216$ tuples $\xi_i$, $144$ of which are local fixed points. Extending
the approach in~\cite[Chap. 5]{Mortveit:07}, we construct the
adjacency matrix $A$ for the graph $G$ whose vertices are the local
fixed points and with directed edges all  $(\xi_i, \xi_{i'})$ such
that $\xi_i \triangleleft \xi_{i'}$
Using symbolic algebra software, it is straightforward to
verify that the characteristic polynomial of $A$ is
\begin{equation*}
  \chi_A(x) = x^{144} - 6x^{143} + 8x^{142} - 2x^{141} - x^{140} \;.
\end{equation*}
It follows that the number of fixed points $L_n$ over $\Circ_n$ for
$\F^\uparrow_\pi$ satisfies the recursion relation
\begin{equation*}
  L_n = 6L_{n-1} + 8 L_{n-2} - 2L_{n-3} - L_{n-4}
\end{equation*}
with initial values $L_3 = 78$, $L_4 = 324$, $L_5 = 1366$ and $L_6 =
5780$ obtained through direct calculations. This may be solved to give
the explicit formula
\begin{equation}
\label{eq:circ}
L_n = 2 + (2+\sqrt{5})^n + (2-\sqrt{5})^n
    = 2 + \varphi^{3n} + (1-\varphi)^{3n}
    = 2 + \Luc_{3n}\;,
\end{equation}
completing the proof.
\end{proof}
The simple form of~\eqref{eq:circ} indicates that it may be possible to
construct a proof similar to the one for the graph $\Path_n$ in the previous
section but instead using suitable extensions of fixed points over
$\Circ_{n-1}$ and $\Circ_{n-2}$.

Clearly, the number of fixed points scales with $n$ as $(2+\sqrt{5})^n$
whereas the number of total states in phase space is~$6^n$. We thus see that
the number of fixed points over $\Path_n$ and $\Circ_n$ essentially only
differ by the constant factor $1-\tfrac{1}{\sqrt{5}}$.

We remark that the method employed here for $\etgds$ and $\Circ_n$ can
be extended to the graph $\Circ_{n,r}$ where each vertex $i$ is
connected to all vertices $j$ for which the graph distance in
$\Circ_n$ is less than or equal to~$r$; this corresponds to the setting of
radius-$r$ elementary cellular automata. However, the book-keeping
involved makes it somewhat impractical.


\subsection{Fixed Points over Trees.}
\label{sec:trees}

In this section we describe an algorithm that can be used to determine
the number of fixed points in~\eqref{eq:fixset} in the case where the
graph $X$ is a tree. The generalizations of this algorithm to general
GDS maps over trees is being pursued elsewhere, so we only outline the
ideas and illustrate with an example.  

To avoid any confusion we will write $T$ instead of $X$ for the graph
to emphasize this point when needed.  The algorithm considers a tree
as the \emph{union of paths}, such that any two paths have at most one
vertex in common.  Recall that the \emph{union} of two graphs $X_1$
and $X_2$ is the graph $X = X_1 \cup X_2$ with $\vset[X]
= \vset[X_1] \cup \vset[X_2]$ and $\eset[X] =
\eset[X_1] \cup \eset[X_2]$. Clearly, a tree $T$ can be described as a union
of paths $p_i$ for which any pair of paths have at most one vertex in common.
We denote the number of fixed points by $\Fix(T)$, and for $v \in
\vset[X]$ we write $\chi(v; X)$ for the number of fixed points
over~$X$ with $i^{\text{th}}$ component $s_i = (0,d_X(i)+1)$. Here we
use $X$ as subscript in the degree $d_X(i)$ since $i$ will appear as a vertex
in multiple graphs.

\begin{figure}[ht]
\centerline{\includegraphics[width=0.9\linewidth]{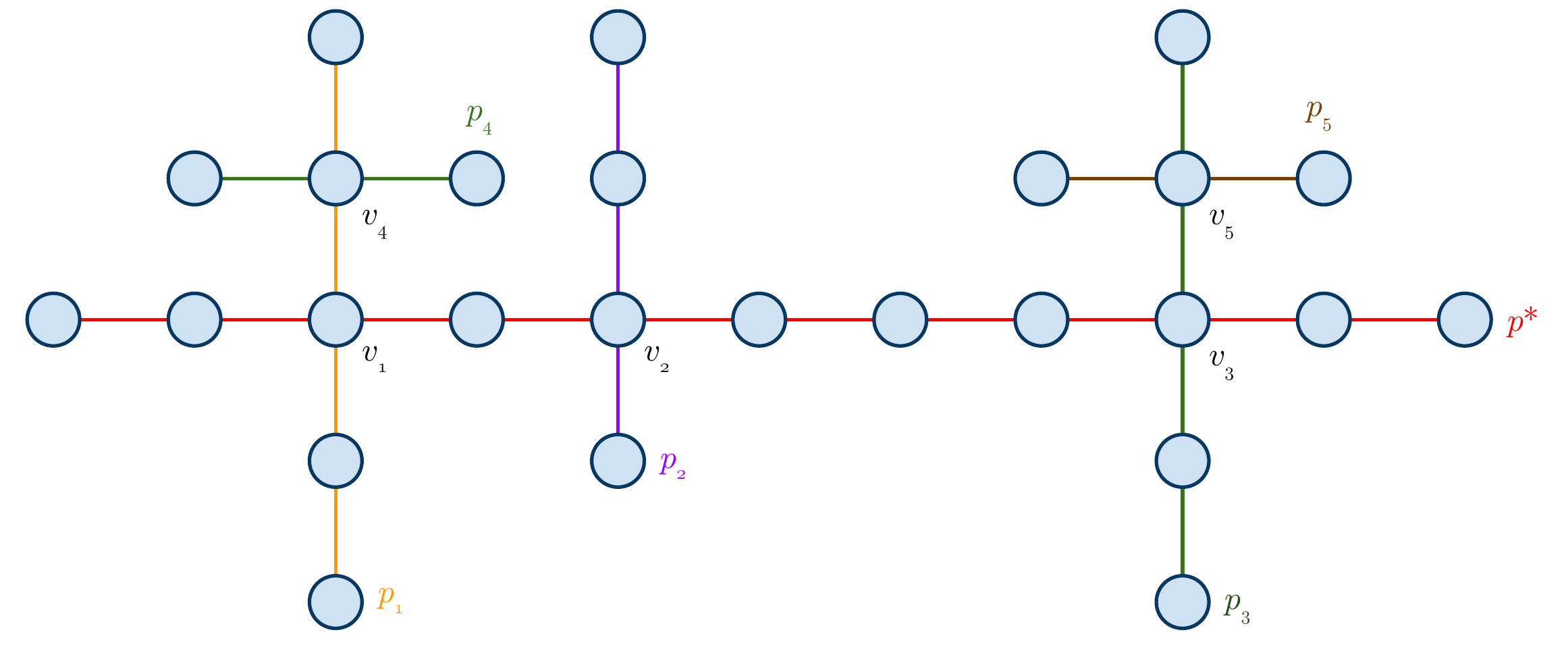}}
\caption{A tree illustrating the algorithm to determine $\Fix(T)$ of
  Section~\ref{sec:trees}. }
\label{fig:tree}
\end{figure}

As a specific example to illustrate the algorithm, we will consider
the tree~$T$ shown in Figure~\ref{fig:tree}.  This tree has been split
into paths $p^*$ and $p_1$ through $p_5$.  The general idea is to
first consider the graph consisting only of path $p^*$, and then find
the fixed points as each additional path is appended in the order
$p_1$, $p_2$, $p_3$, $p_4$, and $p_5$.  We denote by $T_1$ the tree
consisting of $p^*$ and $p_1$, by $T_2$ the tree $T_1$ with $p_2$ and
so on. That is, we set $T_0 = p^*$ and $T_i = T_{i-1} \cup p_i$ for
$1 \le i \le 5$. To compute the fixed points of $T_i$, we have to keep
track of the vertices $v_i$ where paths intersect.

The initial path $p^*$ contains the three intersection vertices $v_1$,
$v_2$, and $v_3$.  For each such vertex $v_i$, we need the number of
fixed points of $p^*$ where $s_i = (0,d_{p^*}(i)+1)$ which is given
by the following proposition.
\begin{proposition}
\label{prop:a2}
Let $p$ be a path on $n$ vertices and denote by $r_i = r_i(p)$ the minimal
distance (standard graph metric) of the vertex $v_i\in p$ to and end of
$p$. Then we have
\begin{equation}
\label{eq:chi}
  \chi(p;v_i) = \fib \bigl(3(r_i+1)-2\bigr) \fib\bigl(3(n-r_i)-2\bigr) \;.
\end{equation}
\end{proposition}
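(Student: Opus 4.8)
The plan is to observe that pinning a vertex to the extended state $(0,d(v)+1)$ makes it ``inert'' --- it is automatically a local fixed point and it feeds only the value $0$ into its neighbors --- so that it decouples $p$ into the two sub-paths meeting at $v_i$, and then to compute the resulting one-sided count by a short recursion anchored on Proposition~\ref{prop:npath}. First I would record the elementary fact that $(0,d(v)+1)$ is always a local fixed point: when $x_v=0$ the restriction $x[v]$ has at most $d(v)$ ones, so $\sigma(x[v])\le d(v)<d(v)+1=k_v$ and the first clause of~\eqref{eq:fix_cond} holds. Consequently, in a fixed point with $s_i=(0,d_p(v_i)+1)$ the local fixed-point condition at $v_i$ is vacuous, and since on a path condition~\eqref{eq:fix_cond} for a vertex involves only that vertex and its (at most two) neighbors, the conditions for $v_1,\dots,v_{i-1}$ depend on $s_{v_i}$ only through the constant $x_{v_i}=0$, and likewise for $v_{i+1},\dots,v_n$. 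Hence the number of admissible completions factors. Writing $E(m):=\chi(\Path_m;v_1)$ for an endpoint $v_1$ of $\Path_m$ (independent of the chosen endpoint, by reversal symmetry), and using that same symmetry to assume $r_i=i-1\le n-i$, I would then check that the left factor equals $E(i)=E(r_i+1)$ and the right factor equals $E(n-i+1)=E(n-r_i)$, so that
\begin{equation*}
\chi(p;v_i)\;=\;E(r_i+1)\cdot E(n-r_i).
\end{equation*}

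It then remains to prove $E(m)=\fib(3m-2)$, and for this I would establish the recursion $E(m+1)=E(m)+2\Fib{3m-1}$ by splitting the fixed points of $\Path_{m+1}$ carrying $s_{v_1}=(0,2)$ according to the threshold $k_{v_2}$ of the neighbor $v_2$ of $v_1$, which lies in $\{1,2,3\}$ since $v_2$ has degree $2$. If $k_{v_2}=3$, the same reasoning as above forces $v_2$ to $(0,3)$, so $v_2$ is again inert and the remaining data is precisely a fixed point of $\Path_m$ on $v_2,\dots,v_{m+1}$ with the endpoint $v_2$ pinned, contributing $E(m)$. If $k_{v_2}\in\{1,2\}$, then the local fixed-point condition at $v_2$ --- which already sees $x_{v_1}=0$ --- coincides with the condition for $v_2$ viewed as the degree-$1$ endpoint of the path $v_2v_3\cdots v_{m+1}$, and all remaining conditions are unchanged, so this case is in bijection with the fixed points of $\Path_m$ and contributes $2\Fib{3m-1}$ by Proposition~\ref{prop:npath}. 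Adding the two cases gives the recursion.

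Finally I would finish by induction on $m$: the base case is $E(1)=1=\fib(1)$, since the lone vertex of $\Path_1$ is pinned to $(0,1)$, a fixed point; and for the step $E(m+1)=\Fib{3m-2}+2\Fib{3m-1}=\Fib{3m}+\Fib{3m-1}=\Fib{3m+1}=\fib(3(m+1)-2)$, using only the Fibonacci recursion. Substituting $E(m)=\fib(3m-2)$ into the factorization then yields $\chi(p;v_i)=\fib(3(r_i+1)-2)\,\fib(3(n-r_i)-2)$, which is~\eqref{eq:chi}. I expect the main obstacle to be entirely book-keeping: verifying cleanly, once and for all, that an inert ($x=0$, maximal-threshold) vertex can be deleted from a path without changing the admissible extended states on the two sides --- the degrees of the remaining vertices, and hence conditions~\eqref{eq:fix_cond}, are unaffected, and the deleted vertex only ever contributed $0$. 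This fact underlies both the splitting step and the $k_{v_2}=3$ case of the recursion, and it is the only place where one could slip.
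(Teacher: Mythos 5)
Your proposal is correct, but note that the paper itself never proves Proposition~\ref{prop:a2} (or \ref{prop:a3}, or Theorem~\ref{thm:a1}); it explicitly defers the supporting proofs elsewhere, so you are supplying an argument the paper omits. Your route --- observing that a vertex pinned to $(0,d(v)+1)$ imposes no constraint of its own and feeds only the constant $0$ into its neighbors, hence splits the path into two independently counted sides, and then computing the one-sided count $E(m)=\chi(\Path_m;\text{endpoint})$ by the recursion $E(m+1)=E(m)+2\Fib{3m-1}$ anchored on Proposition~\ref{prop:npath} --- is sound, and it is clearly in the spirit of the machinery the authors do present (the inert-vertex factorization is exactly what makes the product formula of Proposition~\ref{prop:a3} and the tree-splitting in Theorem~\ref{thm:a1} plausible). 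I checked the key steps: the vacuity of condition~\eqref{eq:fix_cond} at a maximal-threshold $0$-vertex, the identification of each side with a shorter path whose pinned endpoint has threshold $2$ (degrees and hence threshold ranges of the unpinned vertices are unchanged), the forced value $x_{v_2}=0$ when $k_{v_2}=d(v_2)+1$ and $x_{v_1}=0$, and the bijection with $\Fix(\Path_m)$ when $k_{v_2}\in\{1,2\}$; all are correct, and small cases ($n=2$ endpoint gives $3=\fib(4)$, $n=3$ center gives $9=\fib(4)^2$) confirm the formula. The only loose end is a boundary case: your derivation of the recursion assumes $v_2$ has degree $2$, i.e.\ $m\ge 2$, so the step from $E(1)$ to $E(2)$ is not covered as written; either verify $E(2)=3=\fib(4)$ directly (three admissible states $(0,1),(0,2),(1,1)$ for the free vertex) as a second base case, or note that the same case split with threshold set $\{1,2\}$ in place of $\{1,2,3\}$ gives $E(2)=E(1)+2\Fib{2}$. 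With that trivial patch the proof is complete.
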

\let\zeta\chi
To keep track of the vertices~$v_2$ and $v_3$ in trees $T_i$, we also need the
number of fixed points of $p^*$ where multiple vertices of intersection
simultaneously satisfy $s_i = (0,d_{p^*}(i)+1)$.  For a set
$\{v_{\alpha}\}$ of such vertices, we denote this quantity by
$\zeta(\{v_{\alpha}\}; p^*)$. Here we have:
\begin{proposition}
\label{prop:a3}
Let $p = (V=\{v_1,\dots,v_n\},\{e_1,\dots,e_{n-1}\})$ be a path, let
$\{v_{\alpha_1},\dots,v_{\alpha_m} \} \subset V$ with $\alpha_i<
\alpha_{i+1}$, and split $p$ into paths $p_1, \ldots, p_{m+1}$. Then the
number of fixed points over $p$ where $s_{\alpha_i} = (0,d_p(\alpha_i)+1)$ for all
$1 \le i \le m$ is given by
\begin{equation}
\label{eq:chiext}
\qquad \zeta(\{v_{\alpha}\}; p ) =
   \chi(v_1; p_1) \chi(v_1; p_{m+1})  \displaystyle\prod_{i=1}^{m-1}\,
  \displaystyle\sum_{j=1}^{\alpha_{i+1}-\alpha_i} \fib(3j-2) \;.
\end{equation}
\end{proposition}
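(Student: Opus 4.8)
The plan is to write $\zeta(\{v_\alpha\};p)$ as a product of counts over the $m+1$ subpaths into which $p$ is cut at the vertices $v_{\alpha_1}<\dots<v_{\alpha_m}$, namely $p_1=(v_1,\dots,v_{\alpha_1})$, $p_{i+1}=(v_{\alpha_i},\dots,v_{\alpha_{i+1}})$ for $1\le i\le m-1$, and $p_{m+1}=(v_{\alpha_m},\dots,v_n)$, where consecutive subpaths overlap in exactly one (split) vertex. The engine is a \emph{decoupling principle}. By the local fixed-point criterion~\eqref{eq:fix_cond}, a vertex $v$ carrying the extended state $s_v=(0,d_p(v)+1)$ satisfies its own condition automatically, because $x_v=0$ forces $\sigma(x[v])\le d_p(v)<d_p(v)+1=k_v$; and $k_v$ enters condition~\eqref{eq:fix_cond} at no neighbour of $v$---only the bit $x_v=0$ does. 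Hence pinning $v$ to this maximal zero-state severs $p$ at $v$: a fixed point of $p$ with $v$ so pinned is precisely an independent pair of fixed points of the two pieces, each with $v$ re-pinned to the maximal zero-state for its degree in that piece (i.e.\ $(0,2)$ once $v$ is an endpoint there). The extension count is unaffected by this re-pinning, since in either context $v$ contributes a single forced extended state and imposes no constraint on the remaining vertices.

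Making this precise for one interior pinned vertex and iterating over $v_{\alpha_1},\dots,v_{\alpha_m}$ gives
\[
\zeta(\{v_\alpha\};p)=\prod_{j=1}^{m+1}N_j ,
\]
where $N_j$ is the number of fixed points of $p_j$ in which every shared endpoint of $p_j$ is pinned to $(0,2)$. The end pieces $p_1$ and $p_{m+1}$ each have exactly one shared endpoint, so by the definition of $\chi$---and the reflection symmetry of a path, which lets us relabel the shared endpoint as the first vertex---$N_1=\chi(v_1;p_1)$ and $N_{m+1}=\chi(v_1;p_{m+1})$. A middle piece $p_{i+1}$ is a path on $\alpha_{i+1}-\alpha_i+1$ vertices with \emph{both} endpoints pinned to $(0,2)$, so once we show
\[
N_{i+1}=\sum_{j=1}^{\alpha_{i+1}-\alpha_i}\fib(3j-2),
\]
substitution into $\prod_j N_j$ yields~\eqref{eq:chiext} exactly.

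So the one genuine computation is the lemma: the number $h(\ell)$ of fixed points of a path on $\ell+1$ vertices with both endpoints fixed to $(0,2)$ equals $\sum_{j=1}^{\ell}\fib(3j-2)$. I would prove it in the spirit of Proposition~\ref{prop:npath}. The pinned endpoints fix their bits to $0$ and contribute a factor $1$, after which $h(\ell)$ is a weighted sum over the $2^{\ell-1}$ bit-patterns of the interior vertices: the weight of an interior vertex $u$ is the number of thresholds keeping it a local fixed point, which is $3$ minus the number of $1$'s among its two neighbours when $x_u=0$, and $1$ plus that number when $x_u=1$. This is a finite transfer-matrix evaluation (equivalently, one may run the charging recursion used for $\Path_n$); it produces $h(1)=1$, $h(2)=4$, and the linear recurrence $h(\ell+1)=4h(\ell)+h(\ell-1)$. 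The partial sums $c_\ell=\sum_{j=1}^{\ell}\fib(3j-2)$ obey the same recurrence and the same two initial values (a short Fibonacci identity, e.g.\ via $\fib(m+3)=4\fib(m)+\fib(m-3)$), whence $h(\ell)=c_\ell$ and the proposition follows.

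The decoupling principle and the product decomposition should be essentially formal once~\eqref{eq:fix_cond} is in hand; the only point there deserving explicit comment is that cutting changes the degree of a split vertex yet leaves its number of extensions equal to one in every relevant graph. The one place demanding real care is the recurrence for $h(\ell)$: arranging the transfer matrix (or the charging bookkeeping) so that every fixed point is counted exactly once, with correct treatment of the two vertices adjacent to the pinned endpoints. This mirrors the argument already carried out for $\Path_n$ in Proposition~\ref{prop:npath}, so I expect it to go through with an analogous finite case analysis.
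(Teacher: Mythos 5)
The paper states Proposition~\ref{prop:a3} without proof (the supporting details are explicitly deferred elsewhere), so there is no in-paper argument to compare against; judged on its own, your proposal is sound and supplies the missing proof. The decoupling step is correct: pinning $v_{\alpha_i}$ to $(0,d_p(\alpha_i)+1)$ makes condition~\eqref{eq:fix_cond} vacuous at that vertex, and since a vertex's threshold never appears in its neighbours' conditions while the pinned bit $0$ contributes nothing to their $\sigma$-counts, the admissible extensions on the two sides of a pinned vertex are independent; hence the count factors over the pieces, each split vertex contributing exactly one extended state in every piece containing it, and the identification of the two end factors with $\chi(\cdot;p_1)$ and $\chi(\cdot;p_{m+1})$ via reflection is legitimate (and consistent with Proposition~\ref{prop:a2}, which assigns the same value to either endpoint). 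Your weight bookkeeping for interior vertices ($3-\sigma_{\mathrm{nbrs}}$ when the bit is $0$, $1+\sigma_{\mathrm{nbrs}}$ when it is $1$) is also correct. The one step you assert rather than prove is the middle-segment lemma $h(\ell)=\sum_{j=1}^{\ell}\fib(3j-2)$ via $h(\ell+1)=4h(\ell)+h(\ell-1)$; this is true, and both routes you name work. For the transfer-matrix route you can even sidestep proving the order-two recurrence for $h$ directly: the pair transfer matrix for a doubly pinned segment has characteristic polynomial $x^4-6x^3+8x^2-2x-1=(x^2-4x-1)(x-1)^2$ (the same quartic, up to a power of $x$, as in the proof of Proposition~\ref{prop:circ}), so $h$ satisfies the associated order-four recurrence; the partial sums $c_\ell=\sum_{j\le\ell}\fib(3j-2)$ satisfy $c_{\ell+1}=4c_\ell+c_{\ell-1}$, hence the same order-four recurrence, and equality follows from checking $\ell=1,2,3,4$ (values $1,4,17,72$). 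With that lemma in place, substitution into your product decomposition yields~\eqref{eq:chiext} exactly, including the degenerate cases where consecutive $\alpha_i$ are adjacent or a split vertex is an endpoint of $p$.
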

With Propositions~\ref{prop:a2} and~\ref{prop:a3} in place we can now state
the final result needed:
\begin{theorem}
\label{thm:a1}
If $X = X_1 \cup X_2$, where $X_1$ and $X_2$ intersect in precisely
one vertex~$v$, then
\begin{equation}
\label{eq:treenfix}
  \Fix(X) = \chi(v; X_1) \Fix(X_2)
   + \bigl(\Fix(X_1)-2\cdot\chi(v; X_1)\bigr) \chi(v; X_2)\;.
\end{equation}
\end{theorem}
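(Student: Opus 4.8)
The plan is to decompose the fixed points of $X = X_1\cup X_2$ according to the extended state $s_v = (x_v,k_v)$ of the shared vertex $v$, and then to observe that a fixed point of $X$ is exactly a pair of fixed points of $X_1$ and $X_2$ that are \emph{compatible} at $v$ in the sense of Proposition~\ref{prop:fix}. The subtlety is that the local fixed-point condition~\eqref{eq:fix_cond} at $v$ depends on $\sigma(x[v])$, which is computed using \emph{all} neighbors of $v$ in $X$, i.e.\ its neighbors in $X_1$ together with its neighbors in $X_2$; thus a fixed point of $X_1$ alone and a fixed point of $X_2$ alone need not glue to a fixed point of $X$ even when they agree on $s_v$. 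The key structural fact to exploit is that $D_v$ is defined relative to the ambient graph, so the threshold $k_v$ for the fixed point over $X$ ranges in $\{1,\dots,d_X(v)+1\}$ with $d_X(v) = d_{X_1}(v) + d_{X_2}(v)$, whereas over $X_1$ (resp.\ $X_2$) alone it would range only up to $d_{X_1}(v)+1$ (resp.\ $d_{X_2}(v)+1$).

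The main step is the following gluing lemma. Fix $x_v\in\{0,1\}$ and let $s$ be a candidate fixed point of $X$ whose restrictions to $X_1$ and $X_2$ are $s^{(1)}$ and $s^{(2)}$. Write $a = \sigma_{X_1}(x[v])$ and $b = \sigma_{X_2}(x[v])$ for the number of $1$-neighbors of $v$ within each piece (counting $x_v$ itself once, say in the $X_1$ count, or handling the overlap bookkeeping carefully). Then $s$ is a fixed point of $X$ iff away from $v$ the restrictions are fixed points over the respective pieces \emph{and} the combined condition~\eqref{eq:fix_cond} holds at $v$ with the full count $a+b$ (minus the double-counted $x_v$). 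When $x_v = 0$ this is $a + b - x_v < k_v$, i.e.\ $a+b < k_v$; when $x_v=1$ it is $a+b - 1 \ge k_v - ?$ — the precise normalization must be pinned down from $n[v]$, but morally the condition on the two pieces decouples once we know which ``slack'' each piece contributes. The crucial observation is that a local fixed point of $X_1$ at $v$ with $x_v = 0$ and threshold $k_v^{(1)}$ corresponds, after choosing $k_v = k_v^{(1)} + k_v^{(2)} - 1$ (or similar), to a local fixed point of $X$; and the extreme choice $s_v = (0, d_{X_1}(v)+1)$ — the one counted by $\chi(v;X_1)$ — is precisely the configuration where vertex $v$ imposes \emph{no constraint} on the $X_1$ side (every neighbor could be $1$), so it glues to \emph{any} fixed point on the $X_2$ side.

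This is what the two terms of~\eqref{eq:treenfix} encode. In the first term, $\chi(v;X_1)\,\Fix(X_2)$: we take the maximal-threshold zero-state at $v$ on the $X_1$ side (there are $\chi(v;X_1)$ ways to complete this to a fixed point of $X_1$), which imposes nothing on $X_2$, so we may append \emph{any} of the $\Fix(X_2)$ fixed points of $X_2$ — but one must verify the $v$-state recorded by the $X_2$ fixed point is compatible with ``$x_v = 0$ and $k_v$ large'', which by Lemma~\ref{lem:state1} accounts for exactly the $x_v = 0$ half, and then the threshold is rescaled. The second term $\bigl(\Fix(X_1) - 2\chi(v;X_1)\bigr)\chi(v;X_2)$ handles the remaining fixed points of $X_1$ — those not of the special maximal-zero-threshold type, nor (by the factor $2$ and the conjugacy $\psi$ of Proposition~\ref{prop:conj} / Lemma~\ref{lem:state1}) of the mirror-image maximal-one-threshold type — each of which forces $s_v$ on the $X_2$ side to be the maximal zero-state, giving the factor $\chi(v;X_2)$. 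I would prove this by: (1) setting up the restriction map $\Fix(X) \to \Fix(X_1)\times\Fix(X_2)$ and characterizing its image and fibers via~\eqref{eq:fix_cond} at $v$; (2) classifying the possible $s_v$ into the three regimes — $x_v=0$ with $k_v$ maximal, $x_v=1$ with $k_v$ minimal, and everything else — and showing in each regime exactly how much freedom is left on the other side; (3) tallying, using $\chi$ and $\Fix$ for the counts and Lemma~\ref{lem:state1} / Proposition~\ref{prop:conj} to identify the factor of $2$.

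The main obstacle I anticipate is step~(2): getting the threshold-normalization arithmetic exactly right so that a pair of ``partial'' local fixed points at $v$ — one valid over $X_1$, one over $X_2$ — corresponds bijectively to a single valid local fixed point over $X$, with the correct $k_v\in\{1,\dots,d_X(v)+1\}$. In particular one must check that the only $X_1$-fixed points that glue to \emph{more than one} $X_2$-class are precisely the two extreme families ($x_v=0$, $k_v=d_{X_1}(v)+1$ and its $\psi$-image), which is exactly why the coefficient of $\chi(v;X_2)$ is $\Fix(X_1) - 2\chi(v;X_1)$ and not something else; this is the combinatorial heart of the formula and where an off-by-one in the definition of $D_v$ would be fatal.
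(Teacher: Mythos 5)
Your overall strategy --- restricting a fixed point of $X$ to the two pieces, classifying by the extended state $s_v=(x_v,k_v)$ at the cut vertex, and invoking Lemma~\ref{lem:state1} and Proposition~\ref{prop:conj} to explain the factor $2$ --- is the right one (note the paper states Theorem~\ref{thm:a1} without proof, deferring details elsewhere), but the step you yourself flag as the combinatorial heart is left unresolved and, as sketched, is incorrect. The proposed correspondence $k_v = k_v^{(1)}+k_v^{(2)}-1$ between pairs of admissible thresholds over $X_1,X_2$ and admissible thresholds over $X$ cannot be a bijection: fix the states away from $v$, take $x_v=0$, and let $a$ (resp.\ $b$) be the number of $1$-neighbors of $v$ inside $X_1$ (resp.\ $X_2$); then the admissible pairs number $\bigl(d_{X_1}(v)+1-a\bigr)\bigl(d_{X_2}(v)+1-b\bigr)$, while the admissible $k_v$ over $X$ number $d_{X_1}(v)+d_{X_2}(v)+1-a-b$, so the relation is additive, not multiplicative. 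Likewise the $x_v=1$ normalization is left as ``$\ge k_v-?$'', and the gluing in your first term is not literally well defined, since $\chi(v;X_1)$ counts configurations with $x_v=0$ at $v$ while $\Fix(X_2)$ contains fixed points with $x_v=1$; repairing it requires the observation (via $\psi$) that $\chi(v;X_1)$ also equals the number of fixed points of $X_1$ with $s_v=(1,1)$, equivalently the number of admissible completions of $X_1\setminus\{v\}$ given $x_v=1$, which you gesture at but never establish.

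The bookkeeping that closes the argument is the following. For fixed admissible states off $v$, the number of $k_v\in\{1,\dots,d_X(v)+1\}$ satisfying \eqref{eq:fix_cond} at $v$ equals $\bigl(d_{X_1}(v)+1-a\bigr)+\bigl(d_{X_2}(v)+1-b\bigr)-1$ when $x_v=0$ and $(a+1)+(b+1)-1$ when $x_v=1$; in both cases it is the admissible count over $X_1$ plus that over $X_2$ minus one. Summing this identity over all configurations, and using that the number of admissible completions of $X_i\setminus\{v\}$ equals $\chi(v;X_i)$ for either value of $x_v$ (for $x_v=0$ take $k_v=d_{X_i}(v)+1$, for $x_v=1$ take $k_v=1$ and apply $\psi$), together with the fact that exactly half of $\Fix(X_i)$ has $x_v=0$ (Lemma~\ref{lem:state1}), yields $\Fix(X)=\chi(v;X_1)\Fix(X_2)+\chi(v;X_2)\Fix(X_1)-2\chi(v;X_1)\chi(v;X_2)$, which is exactly \eqref{eq:treenfix}. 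Your three-regime classification can then be recovered as a repackaging of this additive identity, but not through the threshold pairing you proposed.
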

Using Theorem~\ref{thm:a1}, we first compute $\Fix(T_1)$ and proceed
iteratively until we have arrived at the full tree $T$. For the first step we obtain
\begin{align*}
\Fix(T_1) &= \chi(v_1; p^*)\Fix(p_1) +
       \bigl(\Fix(p^*)-2\chi(v_1; p^*)\bigr) \chi(v_1; p_1) \;, \\
 \chi(v_i, T_1) &= \zeta(\{v_1,v_i\}, p^*)\Fix(p_1) \\ &\phantom{=}+ 
     [\chi(v_i, p^*)-2\,\zeta(\{v_1,v_i\}, p^*)] \,\chi(v_1, p_1),\; i=2,3 \\
 \zeta(\{v_2,v_3\},T_1) &= \zeta(\{v_1,v_2,v_3\}, T_1) \Fix(p_1) \\
 &\phantom{=}+ [\zeta(\{v_2,v_3\}, p^*)-2\,\zeta(\{v_1,v_2,v_3\}, p^*)] \,\chi(v_1,p_1) \\
 \chi(v_4,T_1) &= \chi(v_1,p^*)\chi(v_1,p_1) + \left(\Fix(p_1) - 
  2\chi(v_1,p^*)\right)\zeta(\{v_1,v_4\}, p_1) \\
 \zeta(\{v_i,v_4\}, T_1) &= \zeta(\{v_1,v_i\}, p^*) \chi(v_4, p_1) \\
&\phantom{=}+ [\chi(v_i, p^*)-2\,\zeta(\{v_1,v_i\},p^*)] \zeta(\{v_1,v_4\}, p_1),\quad  i=2,3 \\
 \zeta(\{v_2,v_3,v_4\}, T_1) &= \zeta(\{v_1,v_2,v_3\},p^*)\chi(v_4,p_1) \\
 &  \phantom{=}+ [\zeta(\{v_2,v_3\},p^*)-2\,\zeta(\{v_1,v_2,v_3\},p^*)]\zeta(\{v_1,v_4\},p_1)
\end{align*}
where each of the terms involving $\chi$ are computed using~\eqref{eq:chi}
and~\eqref{eq:chiext} where the factor $\chi(v_2;T_1)$ can be determined via
Theorem~\ref{thm:a1}, Lemma~\ref{lem:state1} and
Proposition~\ref{prop:a3}. In the same manner we get
\begin{equation*}
  \Fix(T_i) = \chi(v_i; T_{i-1})\Fix(p_i) +
    \bigl(\Fix(T_{i-1}) - 2\chi(v_i; T_{i-1})\bigr)\chi(v_i; p_i) \;,
\end{equation*}
for $2 \le i \le 5$. Careful evaluation shows that 
\begin{align*}
\Fix(T_1) &= 1,142,003,642, \\
\Fix(T_2) &= 70,046,004,938, \\
\Fix(T_3) &= 18,361,190,404,154, \\
\chi(v_5, T_3) &= 4,096,066,198,731, \\
\Fix(T_4) &= 263,558,770,077,330, \\
\chi(v_5, T_4) &= 58,795,434,594,819, \text{and finally} \\
\Fix(T) &= \Fix(T_5) = 3,783,119,360,971,626 \;. 
\end{align*}
Again, the details and generalizations of this algorithm with
supporting proofs are being pursued elsewhere.


\section{Summary and Future Work}
\label{sec:summary}

In this paper we introduced \emph{dynamic threshold} graph dynamical
systems.  This setting provides a more realistic model of phenomena
involving complex contagions where the time evolution may affect the
threshold values of the system entities. The results we have given on
limit set structures and their enumeration may serve as a useful
starting point for the development of mathematical models for such
phenomena in a broad class of disciplines.

For application purposes, it is also desirable to know stability properties of
the limit sets. Is a fixed point stable under a class of state perturbations?
An example of such work is given in~\cite{Kumar:09}. Shedding light on the
stability of \etgds seems like a natural avenue for future work as does
generalizations of the threshold models considered here. The potential
function may be useful in this regard as it offers insight into the transient
structure of the system. In particular, this could give insight and bounds for
convergence rates for dynamic threshold systems.
From a combinatorial perspective, a more direct derivation of the
recurrence relation enumerating fixed points over the circle graph
through an extension/charging scheme would be desirable and may also
shed some more light on the structure of the fixed points.

Finally, the assumption that the transitions where $x_v$ switches from
$0$ to $1$ and from $1$ to $0$ are governed by the common threshold
$k_v$ may not always be applicable. Work investigating dynamic
threshold models with separate thresholds $k_v$-up and $k_v$-down is a
natural extension of this work.


\section*{Acknowledgments}

We thank Samarth Swarup, Zhengzheng Pan, and Maleq Khan for
discussions and valuable suggestions. Partial support for this work
was provided through the NSF sponsored REU program ``Modeling and
Simulation in Systems Biology'' at VBI, Virginia Tech 2010 (NSF Award
Number: 0755322). We thank REU project PI and organizer Reinhard
Laubenbacher for suggestions as well as for program coordination.

\vspace*{-3pt}   



\end{document}